\documentclass[12pt]{amsart}

\usepackage{amsmath}
\usepackage{amssymb}
\usepackage{amsthm}
\usepackage[margin=1in]{geometry}
\usepackage{cleveref}

\theoremstyle{plain}
\newtheorem{theorem}{Theorem}[section]
\newtheorem{proposition}[theorem]{Proposition}
\newtheorem{lemma}[theorem]{Lemma}
\newtheorem{corollary}[theorem]{Corollary}

\theoremstyle{definition}
\newtheorem{definition}[theorem]{Definition}
\newtheorem{remark}[theorem]{Remark}

\newcommand{\Hn}{\mathbb{H}^n}
\newcommand{\op}{\mathrm{op}}

\title[Fourier transform on the Heisenberg Group] {Differentiation and Shift Property Characterisations \\
of the Group Fourier Transform on the Heisenberg Group}

\author{R. Lakshmi Lavanya}
\address{Department of Mathematics, Indian Institute of Science Education and Research (IISER) Tirupati, Yerpedu, Tirupati 517619, Andhra Pradesh, India.}
\email{rlakshmilavanya@labs.iisertirupati.ac.in}

\date{}

\begin{document}

\maketitle

\begin{abstract}
Recently, Krukowski characterised the Euclidean Fourier transform in terms of its behaviour with respect to differentiation. For locally compact abelian groups, he obtained a characterisation based on the behaviour of the Fourier transform under translation. Each of these results assumed that the operator (to be characterised) is an integral transform (with a kernel satisfying suitable regularity conditions), paired with a normalisation. In this paper, we obtain analogous results for the group Fourier transform on the Heisenberg group. Here, the differentiation property is formulated in terms of the behaviour of the group Fourier transform with respect to the left-invariant vector fields, and the shift property is based on its behaviour under right translation. Both of these characterise the group Fourier transform on the Heisenberg group, once paired with a normalisation. Our results assume that the transform is continuous linear, and do not require it to be an integral transform a priori.
\end{abstract}

\noindent{\small\textbf{2020 Mathematics Subject Classification.} Primary 43A80; Secondary 22E25, 22E27, 42A38, 43A30.\\
\textbf{Key words and phrases.} Heisenberg group, group Fourier transform, characterisation of the Fourier transform, differentiation property, shift property, Dirac delta property.}

\section{Introduction}

It is well-known that the Euclidean Fourier transform maps the convolution product into the pointwise product. Over the past two decades, several characterisations of the Fourier transform have been obtained based on this property in addition to other properties. In 2008, Alesker, Artstein-Avidan, and Milman \cite{AAM}, and in 2010, Jaming  \cite{Jaming} obtained such characterisations. This was followed by analogous characterisations of the Fourier transform for the Heisenberg group (\cite{LT2012},\cite{LT2014}), compact groups (\cite{KS}, \cite{Krukowski2}) and locally compact abelian groups (\cite{Krukowski1}). Recently, Achar, Biswas and Manna proved similar characterisations of the Euclidean Fourier transform (\cite{ABM1}), and the group Fourier transform on the Heisenberg group (\cite{ABM2}), both for vector-valued functions. 

In 2024, Krukowski (\cite{Krukowski2}) formulated characterisations of the Fourier transform, based on its behaviour with respect to shift operations (on locally compact abelian groups) and differentiation (for the Euclidean space). Unlike the convolution property, the shift- and the differentiation-based hypotheses involve only one function. This restriction is overcome by an additional assumption, which Krukowski refers to as the 'Dirac Delta property' (see \Cref{def:diracseq,def:diracfam} below). Unlike the Euclidean space, a general locally compact abelian group does not carry an inherent differentiable structure. The Heisenberg group presents itself as an interesting example to study, as it is non-abelian, and has a rich differentiable structure. Recently, such results were obtained for the Strichartz Fourier transform on the Heisenberg group (\cite{Ksft}). In this paper, we prove analogues of Krukowski's results for the group Fourier transform on the Heisenberg group.

First we state Krukowski's definitions and results. 

\begin{definition}[\cite{Krukowski2}]\label{def:diracseq}
We say that $(\delta_k) \subset C_c^1(\mathbb{R})$ is a \textit{Dirac delta sequence} if every $\delta_k$
\begin{enumerate}
    \item[(i)]  has compact support contained in $\left[-\dfrac{1}{k}, \dfrac{1}{k}\right]$,
    \item[(ii)] is nonnegative and symmetric, i.e., $\delta_k(x) = \delta_k(-x)$ for every $x \in \mathbb{R}$,
    \item[(iii)] satisfies the equality
    \[
    \int_{\mathbb{R}} \delta_k(x) \, dx = 1.
    \]
\end{enumerate}
\end{definition}

\begin{definition}[\cite{Krukowski2}]
We say that $T : L^1(\mathbb{R}) \to C^b(\mathbb{R})$ satisfies the \textit{Dirac delta property} if there exists a Dirac delta sequence $(\delta_k) \subset C_c^1(\mathbb{R})$ such that for every $y \in \mathbb{R}$ we have
\begin{equation}
    \lim_{k \to \infty} T(\delta_k)(y) = 1.
\end{equation}
\end{definition}

Krukowski obtained the following characterisation of the Euclidean Fourier transform, in terms of its behaviour with respect to differentiation.

\begin{definition}[\cite{Krukowski2}] Assume $T$ is an integral transform $T : L^1(\mathbb{R}) \to C^b(\mathbb{R})$, such that
\begin{equation*}
T(f)(y) := \int_{\mathbb{R}} K(x,y) \, f(x) \, dx,
\end{equation*}
where $K \in C^b(\mathbb{R} \times \mathbb{R})$ is differentiable with respect to the first variable. Such a map $T$ is said to satisfy the \textit{time differentiation property,} if
\begin{equation}
T(f')(y) = 2\pi i y \, T(f)(y)  \label{eq:kdiffn}
\end{equation}
for every $f \in C_c^1(\mathbb{R})$ and $y \in \mathbb{R}$. 
\end{definition}

\begin{theorem}[{\cite{Krukowski2}}]
\label{thm:kruk}
Suppose that $T : L^1(\mathbb{R}) \to C^b(\mathbb{R})$ is an integral transform with kernel $K \in C^b(\mathbb{R} \times \mathbb{R})$, which is differentiable with respect to the first variable. If $T$ satisfies
\begin{itemize}
    \item the time differentiation property, and
    \item the Dirac delta property,
\end{itemize}
then $T$ is the Fourier transform.
\end{theorem}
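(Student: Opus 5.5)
The plan is to turn the time differentiation property into an ordinary differential equation for the kernel $K(\cdot,y)$, solve it, and then use the Dirac delta property to fix the integration constant. I take the convention that the Fourier transform has kernel $e^{-2\pi i x y}$, i.e.\ $\hat f(y)=\int f(x)e^{-2\pi i xy}\,dx$; this is consistent with $\widehat{f'}(y)=2\pi i y\hat f(y)$.

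Fix $y\in\mathbb{R}$ and let $f\in C_c^1(\mathbb{R})$. Since $f$ has compact support and $K(\cdot,y)$ is differentiable, integration by parts gives
\[
T(f')(y)=\int_{\mathbb{R}} K(x,y) f'(x)\,dx = -\int_{\mathbb{R}} \partial_x K(x,y) f(x)\,dx .
\]
Comparing this with the time differentiation property yields
\[
\int_{\mathbb{R}} \bigl(\partial_x K(x,y)+2\pi i y K(x,y)\bigr) f(x)\,dx=0
\]
for every $f\in C_c^1(\mathbb{R})$.

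To pass from this integral identity to a pointwise one, I would work distributionally to avoid regularity issues with $\partial_x K$. The identity says that, in $\mathcal{D}'(\mathbb{R})$,
\[
\bigl(e^{2\pi i x y}K(x,y)\bigr)' = e^{2\pi i xy}\bigl(\partial_xK+2\pi i yK\bigr)=0 .
\]
The first equality needs the product rule for the distributional derivative. This holds because $K(\cdot,y)$ is differentiable everywhere and the identity above shows that its derivative is locally integrable (indeed it equals $-2\pi i yK$ almost everywhere). A distribution with zero derivative is constant, and $e^{2\pi i xy}K(x,y)$ is continuous, so
\[
K(x,y)=c(y)\,e^{-2\pi i x y}\quad\text{for all }x.
\]
Here $c(y)=K(0,y)$ is bounded, since $K\in C^b$.

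Next I apply the Dirac delta property. Because $\delta_k$ is supported in $[-1/k,1/k]$, is nonnegative, and has unit mass,
\[
\left|T(\delta_k)(y)-c(y)\right| = |c(y)|\left|\int \delta_k(x)\bigl(e^{-2\pi i xy}-1\bigr)dx\right| \le \|K\|_\infty \sup_{|x|\le 1/k}\bigl|e^{-2\pi i xy}-1\bigr|\to 0 .
\]
Hence $c(y)=\lim_k T(\delta_k)(y)=1$ for every $y$, so $K(x,y)=e^{-2\pi i xy}$ and $T$ is the Fourier transform on $L^1(\mathbb{R})$. The identification holds for every $f\in L^1(\mathbb{R})$ directly, because $T$ is defined by its kernel.

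I expect the main obstacle to be the step from the integral identity to the pointwise ODE, since the hypotheses only give differentiability of $K$ in $x$, not continuity of $\partial_xK$. The distributional argument handles this: an everywhere-differentiable function whose derivative is locally integrable is absolutely continuous, which justifies the product rule and shows that the constant obtained is a genuine pointwise constant.
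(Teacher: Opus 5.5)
Your overall route is the one the paper sketches for this cited result in Section~\ref{sec:furtherdisc}. The time differentiation property becomes a first-order ODE in $x$ for $K(\cdot,y)$, so $K(x,y)=c(y)e^{-2\pi i xy}$, and the Dirac delta sequence forces $c\equiv 1$. Your Dirac delta estimate is correct, and it does not even need the symmetry of $\delta_k$.

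The one genuine flaw is in the step you identified as the main obstacle: your justification there is circular. Everywhere differentiability of $K(\cdot,y)$ does not make $\partial_xK(\cdot,y)$ locally integrable; the derivative of $x^2\sin(1/x^2)$ is not integrable near $0$. So the integration by parts $\int Kf'\,dx=-\int \partial_xK\,f\,dx$ is not justified by compact support of $f$ and differentiability of $K$ alone. You then use the resulting identity $\int(\partial_xK+2\pi i yK)f\,dx=0$ to conclude that $\partial_xK$ is locally integrable, but that identity already presupposed local integrability.

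The repair is immediate within your own distributional framework, and it makes the classical derivative irrelevant. For $u:=K(\cdot,y)\in C^b(\mathbb{R})$, the hypothesis gives directly
\[
\langle u',f\rangle=-\int uf'\,dx=-T(f')(y)=-2\pi i y\int uf\,dx
\]
for every $f\in C_c^\infty(\mathbb{R})$. That is, $u'=-2\pi i y\,u$ in $\mathcal{D}'(\mathbb{R})$. The product rule with the smooth multiplier $e^{2\pi i xy}$ holds for every distribution, so $(e^{2\pi i xy}u)'=0$. Hence $e^{2\pi i xy}u$ is a.e.\ equal to a constant, and, being continuous, it equals that constant everywhere.

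With this change the rest of your proof goes through unchanged. The differentiability hypothesis on $K$ is never used; a posteriori $u$ is $C^1$. This is the one-dimensional version of the paper's Lemma~\ref{lemdist}. On $\mathbb{R}$, the fact that a distribution with vanishing derivative is constant replaces the mollification and limiting steps the paper needs on $\Hn$.
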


Now we state Krukowski's definitions and characterisation related to the shift-based property.

Let $G$ be a locally compact abelian group with dual group $\widehat{G}$.  Let $N_0$ denote the set of all open, nonempty neighborhoods of the identity element of $G.$

\begin{definition}[\cite{Krukowski2}] \label{def:diracfam}
A family $(\delta_U)_{U \in N_0}$ is called a \textit{Dirac delta family,} if every $\delta_U$
\begin{itemize}
    \item is continuous and compactly supported, i.e., $\delta_U \in C_c(G)$,
    \item is nonnegative and symmetric, i.e., $\delta_U(x) = \delta_U(-x)$ for every $x \in G$,
    \item satisfies the equality $\int_G \delta_U(x) \, dx = 1$.
\end{itemize}
\end{definition}

\begin{definition}[\cite{Krukowski2}]
We say that $T : L^1(G) \to C^b(\widehat{G})$ satisfies the \textit{Dirac delta property} if there exists a Dirac delta family $(\delta_U)$ such that for every $\chi \in \widehat{G}$ we have $$\lim_{U \to 0} T(\delta_U)(\chi) = 1,$$
or, more explicitly,
\[
\forall_{\varepsilon > 0} \ \exists_{U \in N_0} \ \forall_{\substack{V \subset U \\ V \in N_0}} \ |T(\delta_V)(\chi) - 1| < \varepsilon.
\]
\end{definition}

\begin{theorem}[{\cite{Krukowski2}}]
If a linear and bounded operator $T : L^1(G) \to C^b(\widehat{G})$ satisfies
\begin{itemize}
    \item the time shift property, i.e., for every $f \in L^1(G)$ and $x \in G$, $\chi \in \widehat{G}$ we have
    \begin{equation}
T(L_x f)(\chi) = \overline{\chi(x)} \, T(f)(\chi),\label{eq:ktransln}
\end{equation}
    \item the Dirac delta property,
\end{itemize}
then $T$ is the Fourier transform.
\end{theorem}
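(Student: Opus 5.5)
We will show that the time shift property, together with linearity and boundedness, forces $T$ to act as a multiple of the Fourier transform, with multiplier $\lim_U T(\delta_U)(\chi)$. The Dirac delta property then identifies this multiplier as $1$. Throughout we use the convention $\widehat f(\chi)=\int_G f(x)\overline{\chi(x)}\,dx$ and $L_xf(y)=f(y-x)$, which is the normalisation compatible with the time shift property.

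\textbf{Step 1: $T$ commutes with convolution by a scalar factor.} Fix $f,g\in L^1(G)$. The convolution $f*g=\int_G g(x)\,L_xf\,dx$ is a Bochner integral in $L^1(G)$, because $x\mapsto L_xf$ is continuous from $G$ into $L^1(G)$. Since $T$ is bounded and linear it commutes with this integral. Composing with the point evaluation at $\chi$, which is continuous on $C^b(\widehat G)$, and applying the time shift property gives
\[
T(f*g)(\chi)=\int_G g(x)\,T(L_xf)(\chi)\,dx=T(f)(\chi)\int_G g(x)\overline{\chi(x)}\,dx=T(f)(\chi)\,\widehat g(\chi).
\]
Because convolution is commutative on an abelian group, interchanging the roles of $f$ and $g$ gives
\[
T(f)(\chi)\,\widehat g(\chi)=T(g)(\chi)\,\widehat f(\chi)
\]
for all $f,g\in L^1(G)$ and all $\chi\in\widehat G$.

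\textbf{Step 2: identify the multiplier via the Dirac delta family.} Put $g=\delta_U$. For each fixed $\chi$, the family $(\delta_U)$ is an approximate identity, since it is nonnegative with integral $1$. The intended reading is that $\operatorname{supp}\delta_U\subset U$; this is implicit in the definition and is how the Dirac delta property will be used. Under this reading, continuity of $\chi$ at $0$ gives $\widehat{\delta_U}(\chi)\to 1$ as $U\to 0$. The Dirac delta property gives $T(\delta_U)(\chi)\to 1$. Taking the limit in the identity from Step 1 yields $T(f)(\chi)=\widehat f(\chi)$ for every $f$ and every $\chi$.

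\textbf{Main obstacle.} The delicate point is the support condition on $\delta_U$. As stated, the definition of a Dirac delta family does not tie $\delta_U$ to $U$. Without such a tie, $\widehat{\delta_U}(\chi)\to1$ can fail: one could take all $\delta_U$ equal to a single fixed function. I would therefore make the assumption $\operatorname{supp}\delta_U\subset U$ explicit. With it, the estimate is direct:
\[
|\widehat{\delta_U}(\chi)-1|\le\sup_{x\in U}|\chi(x)-1|\to0 .
\]
The second technical point is justifying that $T$ passes inside the vector-valued integral in Step 1. This follows from the standard fact that bounded operators commute with Bochner integrals, combined with the continuity of translation in $L^1(G)$.
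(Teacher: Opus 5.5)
Your argument is correct, and you are right to insist on $\operatorname{supp}\delta_U\subset U$. The definition as reproduced in the paper (Definition \ref{def:diracfam}) omits it, while the paper's own Heisenberg version (Definition \ref{def:ddf}(i)) includes it. The condition is needed for the statement itself, not only for your Step 2. On $\mathbb{R}$, take $\rho_U\ge0$ symmetric with integral $1$ and support in $U$, fix $a\ne0$, and set $\delta_U=\tfrac34\rho_U+\tfrac18\bigl(\rho_U(\cdot-a)+\rho_U(\cdot+a)\bigr)$ and $c(\xi)=\tfrac34+\tfrac14\cos(2\pi a\xi)\ge\tfrac12$. Then $T(f)=\widehat f/c$ is bounded into $C^b(\mathbb{R})$ and has the time shift property. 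It also satisfies $T(\delta_U)=\widehat{\rho_U}\to1$, yet $T$ is not the Fourier transform.

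The paper does not reprove this cited theorem. The relevant comparison is its proof of the Heisenberg analogue, Theorem \ref{thm:main-shift}, which works through a kernel:
\begin{itemize}
\item $L^1$--$L^\infty$ duality gives a bounded weakly measurable kernel for $f\mapsto B(f)(\lambda)$ (Lemma \ref{lemE});
\item the shift property becomes an a.e.\ covariance relation for that kernel (Lemmas \ref{lemF}, \ref{lemG});
\item a Tonelli argument collapses the kernel to $D(\lambda)\pi_\lambda(g)$ (Lemma \ref{lemH});
\item the Dirac delta property then gives $D(\lambda)=I$.
\end{itemize}
In your abelian setting this route would read $K_\chi(x+y)=\overline{\chi(y)}K_\chi(x)$ a.e., so $K_\chi=c(\chi)\overline{\chi}$.

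You instead integrate the shift property against $g$. This produces the multiplier identity $T(f*g)=T(f)\,\widehat g$ with no null sets to assemble, and pointwise evaluation in $\chi$ is legitimate because $T$ lands in $C^b(\widehat G)$. Your one use of commutativity, $f*g=g*f$, is avoidable. With $\delta_U$ in the first slot, $T(\delta_U*g)(\chi)=T(\delta_U)(\chi)\,\widehat g(\chi)\to\widehat g(\chi)$, while $\delta_U*g\to g$ in $L^1(G)$.

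That variant carries over to $\Hn$. Since $R_{k_1k_2}=R_{k_1}R_{k_2}$, the shift property passes from $\mathcal K$ to all of $\Hn$ directly at the level of $B$, via the product formula in Lemma \ref{lemG}. With $(f*g)(x)=\int_{\Hn}g(y)\,(R_{y^{-1}}f)(x)\,dy$ one gets $B(f*g)(\lambda)=B(f)(\lambda)\pi_\lambda(g)$. Taking $f=\delta_U$ then yields $B(g)(\lambda)=\pi_\lambda(g)$ without Lemmas \ref{lemE} and \ref{lemH}.

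What the paper's kernel route buys is uniformity. Lemma \ref{lemE} also drives the differentiation characterisation, where the hypothesis is infinitesimal and cannot be integrated against $g$ without first passing through Proposition \ref{prop:equiv}.
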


In this paper, we obtain differentiation- and shift-based characterisations for the (group) Fourier transform on the Heisenberg group. The proofs of our results are more complicated than those of the above results, due to the following reasons:
\begin{itemize}
\item For any locally compact abelian group (including $\mathbb{R}$), the Fourier transform of a function is scalar-valued, whereas in the case of the Heisenberg group, the group Fourier transform is operator-valued.

\item Both the results stated above assume that the operator is an integral transform, with a kernel satisfying some regularity condition. The results for the Heisenberg group assume that the operator is continuous and linear, and derive that it is an integral transform whose kernel satisfies a regularity condition.
\end{itemize}

Despite the above-mentioned challenges, the Heisenberg group presents a differentiable structure. This simplifies the hypothesis of the shift-based characterisation to finitely many points unlike the infinitely many points in \Cref{eq:ktransln}. Also, the hypothesis of the differentiation-based characterisation involves only 2n relations in terms of the left-invariant vector fields, unlike the a priori expectation of $(2n+1)$, as in \Cref{eq:kdiffn}. We revisit this discussion with more technical details in Remarks \ref{rem:shift1}, \ref{rem:shift2} and \ref{rem:diffn}.\\

This paper is organised as follows: Section~\ref{sec:prelims} introduces preliminary definitions and notations related to the Heisenberg group $\Hn$. Section~\ref{sec:ddhn} establishes the analogues of Dirac delta family and Dirac delta property in the context of the Heisenberg group. Also, it derives a kernel representation for $B$ (\Cref{lemE}) from continuity and linearity of $B$ alone. This idea underlies both of the main theorems. Section~\ref{sec:diff} formalises a differentiation property in terms of the left-invariant vector fields on $\Hn$, and shows that this property, when paired with a Heisenberg Dirac delta property, characterises the group Fourier transform. Section~\ref{sec:shift} proves the analogous statement for a shift property, based on right translation. In both cases the map is assumed only to be continuous and linear. Finally, Section~\ref{sec:furtherdisc} shows that the differentiation and shift hypotheses are in fact equivalent, and discusses the differences in the nature of the hypotheses used in the earlier settings ({\cite{Krukowski2}) and the current one.

\section{Preliminaries}
\label{sec:prelims}
In this section, we recall definitions and fix our notations regarding the Heisenberg group setting.

\subsection{The Heisenberg group and its group Fourier transform}

We recall the definitions of the group law and the Fourier transform on the Heisenberg group.\\

Let $\Hn=\mathbb{C}^n\times\mathbb{R}$ denote the $(2n+1)$-dimensional Heisenberg group with group law
\[
(z,t)(w,s)=\Big(z+w,\ t+s+\tfrac12\operatorname{Im}(z\cdot\overline{w})\Big),\qquad (z,t),(w,s)\in\Hn.
\]
With respect to this group law, the centre of $\Hn$ is the set $\{(0,t): t \in \mathbb{R}\},$ which can be identified with $\mathbb{R}.$ The Haar measure on $\Hn$ is the Lebesgue measure $dz\,dt$ on $\mathbb{C}^n\times\mathbb{R}.$ Since $\Hn$ is unimodular, this measure is both left- and right-invariant. By the Stone-von Neumann theorem (see \cite{Thangavelu} or \cite{Folland}), every infinite-dimensional irreducible unitary representation of $\Hn$ acting on $L^2(\mathbb{R}^n),$ which is nontrivial at the centre, is unitarily equivalent to one of the Schr\"odinger representations $\pi_\lambda$, $\lambda\in\mathbb{R}^*:=\mathbb{R}\setminus\{0\}$, which are given by
\[
\pi_\lambda(z,t)\varphi(\xi)=e^{i\lambda t}e^{i\lambda(x\cdot\xi+\frac12 x\cdot y)}\varphi(\xi+y),\qquad \varphi\in L^2(\mathbb{R}^n),\ \xi\in\mathbb{R}^n,
\]
where $z=x+iy\in\mathbb{C}^n.$ Write $\pi_\lambda(z):=\pi_\lambda(z,0),$ so that $\pi_\lambda(z,t)=e^{i\lambda t}\pi_\lambda(z)$. For $f\in L^1(\Hn),$ its group Fourier transform  is the operator on $L^2(\mathbb{R}^n),$ defined as
\[
\pi_\lambda(f)=\int_{\Hn}f(z,t)\,\pi_\lambda(z,t)\,dz\,dt.
\]
The group Fourier transform of $f$ at $\lambda$ is a bounded operator on $L^2(\mathbb{R}^n)$ with $\|\pi_\lambda(f)\|_{\op}\le\|f\|_1$. 

Throughout this paper, $(\cdot,\cdot)$ denotes the inner product on $L^2(\mathbb{R}^n),$ linear in the first argument and conjugate-linear in the second.

\subsection{Multiplication and differentiation operators on $L^2(\mathbb{R}^n)$}

This subsection gives the relation between the multiplication and differentiation operators, which combined with the next section, will illustrate the relation between the two main results.\\
 
For $j=1,\dots,n$ let $$Q_j\varphi(\xi)=\xi_j\varphi(\xi) \ \textrm{and \ } P_j\varphi(\xi)=\dfrac{\partial\varphi}{\partial\xi_j}(\xi),$$ for $\varphi$ in the Schwartz space $\mathcal{S}(\mathbb{R}^n)$. The Schwartz space  is invariant under $Q_j$ and $P_j$, for all $j=1,\dots,n$.


Then $Q_j$ is self-adjoint and $P_j^*=-P_j$, and, for all $j=1,\dots,n.$ Moreover, we have the following commutation relations for all $j,k=1,\dots,n$.
\begin{equation}
\label{CCR}
[Q_j,P_k]:=Q_jP_k-P_kQ_j=-\delta_{jk}\,I,\qquad [Q_j,Q_k]=0,\qquad [P_j,P_k]=0 .
\end{equation}
Throughout this paper, all operator identities involving $Q_j$ and $P_j$ are stated as acting on functions in $\mathcal{S}(\mathbb{R}^n)$, and extended as bounded operators on all of $L^2(\mathbb{R}^n)$ by continuity, whenever the operators are bounded.
The actions of the one-parameter unitary groups generated by $P_j,Q_j$ are given by
\[
e^{isQ_j}\varphi(\xi)=e^{is\xi_j}\varphi(\xi),\qquad e^{sP_j}\varphi(\xi)=\varphi(\xi+se_j),\qquad s\in\mathbb{R},
\]
where $e_j\in\mathbb{R}^n$ is the $j$-th standard basis vector.

\subsection{Left-invariant vector fields on $\Hn$}
Here, we recall the definition of the left-invariant vector fields on $\Hn,$ and discuss some of their properties.\\
 
Write $z=x+iy\in\mathbb{C}^n$. The left-invariant vector fields on $\Hn$ are given by
\[
X_j=\frac{\partial}{\partial x_j}+\frac12 y_j\frac{\partial}{\partial t},\qquad
Y_j=\frac{\partial}{\partial y_j}-\frac12 x_j\frac{\partial}{\partial t},\qquad j=1,\dots,n.
\]

Each is characterised, at any $g\in\Hn$, by differentiation along the one-parameter subgroup it generates, given as
\[
X_jf(g)=\frac{d}{dr}\Big|_{r=0}f\big(g\cdot(re_j,0)\big),\qquad
Y_jf(g)=\frac{d}{dr}\Big|_{r=0}f\big(g\cdot(ire_j,0)\big).
\]

A left-invariant vector field commutes with left translation in the following sense. For fixed $h\in\Hn$, writing $L_hf(g):=f(h^{-1}g)$,
\begin{equation}
\label{leftcomm}
X_j(L_hf)(g)=(X_jf)(h^{-1}g),\qquad Y_j(L_hf)(g)=(Y_jf)(h^{-1}g),
\end{equation}
since $h^{-1}(g\cdot(re_j,0))=(h^{-1}g)\cdot(re_j,0)$ by associativity, and similarly for $(ire_j,0)$.

A direct computation gives the Heisenberg bracket relations
\begin{equation}
\label{bracket}
[X_j,Y_j]=-\frac{\partial}{\partial t},\qquad [X_j,Y_k]=0\ (j\ne k),\qquad [X_j,X_k]=[Y_j,Y_k]=0,
\end{equation}
for all $j,k=1,\dots,n$.

Since $X_j,Y_j$ are divergence-free with respect to $dz\,dt$ and $\Hn$ is unimodular, integration by parts gives
\begin{equation}
\label{ibp}
\int_{\Hn}(X_jf)\,h\,dz\,dt=-\int_{\Hn}f\,(X_jh)\,dz\,dt,\qquad
\int_{\Hn}(Y_jf)\,h\,dz\,dt=-\int_{\Hn}f\,(Y_jh)\,dz\,dt,
\end{equation}
whenever $f,h\in C_c^\infty(\Hn)$, or $f$ is Schwartz and $h$ is smooth with all derivatives of polynomial growth (see e.g.\ \cite{Thangavelu}).

For $f\in L^1_{\mathrm{loc}}(\Hn)$ the distributions $X_jf$ and $Y_jf$ are defined by requiring
\begin{equation}
\label{ibp-ext}
\int_{\Hn}f\,(X_jh)\,dz\,dt=-\int_{\Hn}(X_jf)\,h\,dz\,dt,\qquad
\int_{\Hn}f\,(Y_jh)\,dz\,dt=-\int_{\Hn}(Y_jf)\,h\,dz\,dt
\end{equation}
for every $h\in C_c^\infty(\Hn)$, agreeing with the classical derivative by \eqref{ibp} when $f$ is $C^1$. \\

We denote by $\mathcal{S}(\Hn)$ the Schwartz class on $\Hn\cong\mathbb{R}^{2n+1}$. Its basic properties with respect to the group Fourier transform were established in \cite{Geller}. The group Fourier transform satisfies the following property with respect to the left-invariant vector fields on the Heisenberg group. \\

The following result is implicit in \cite{Geller}. We state it here for completeness.
\begin{lemma}
\label{lemA}
For every $\lambda\in\mathbb{R}^*$, $(z,t)\in\Hn$, $\varphi\in\mathcal{S}(\mathbb{R}^n)$ and $j=1,\dots,n$, we have
\[
X_j\pi_\lambda(z,t)\varphi=i\lambda\,\pi_\lambda(z,t)Q_j\varphi,\qquad Y_j\pi_\lambda(z,t)\varphi=\pi_\lambda(z,t)P_j\varphi.
\]
\end{lemma}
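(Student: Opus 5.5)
The plan is to use the one-parameter subgroup characterisation of $X_j$ and $Y_j$ recalled above, together with the fact that $\pi_\lambda$ is a representation, so that $\pi_\lambda(g\cdot h)=\pi_\lambda(g)\pi_\lambda(h)$. For fixed $\varphi\in\mathcal{S}(\mathbb{R}^n)$ we regard $F(g):=\pi_\lambda(g)\varphi$ as a function on $\Hn$ with values in $L^2(\mathbb{R}^n)$, or, for each fixed $\xi$, as a scalar function of $g$. Then, with $g=(z,t)$,
\[
X_jF(g)=\frac{d}{dr}\Big|_{r=0}\pi_\lambda(g)\,\pi_\lambda(re_j,0)\varphi,\qquad
Y_jF(g)=\frac{d}{dr}\Big|_{r=0}\pi_\lambda(g)\,\pi_\lambda(ire_j,0)\varphi .
\]
Everything therefore reduces to computing the two one-parameter families $r\mapsto\pi_\lambda(re_j,0)$ and $r\mapsto\pi_\lambda(ire_j,0)$ at $r=0$.

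First I would confirm the homomorphism property directly from the formula for $\pi_\lambda$ and the group law. Both sides of $\pi_\lambda((z,t)(w,s))=\pi_\lambda(z,t)\pi_\lambda(w,s)$ produce a phase together with a shift by $y+y'$. Comparing the phases, the cross term $\tfrac12\lambda(x'\cdot y-x\cdot y')$ matches $\tfrac{\lambda}{2}\operatorname{Im}(z\cdot\overline w)$. This is the standard fact that the Schrödinger representation is a representation, and it could simply be cited from \cite{Thangavelu} or \cite{Folland}.

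Next I would substitute into the defining formula. For $z=re_j$ we have $x=re_j$ and $y=0$, so $\pi_\lambda(re_j,0)\varphi(\xi)=e^{i\lambda r\xi_j}\varphi(\xi)=e^{i r\lambda Q_j}\varphi(\xi)$. For $z=ire_j$ we have $x=0$ and $y=re_j$, so $\pi_\lambda(ire_j,0)\varphi(\xi)=\varphi(\xi+re_j)=e^{rP_j}\varphi(\xi)$. Differentiating at $r=0$ gives $i\lambda\xi_j\varphi(\xi)=i\lambda Q_j\varphi(\xi)$ and $\partial_{\xi_j}\varphi(\xi)=P_j\varphi(\xi)$, respectively.

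The only real point, which I expect to be the main (mild) obstacle, is justifying that the derivative can be moved past $\pi_\lambda(g)$. I would argue that for $\varphi\in\mathcal{S}(\mathbb{R}^n)$ the difference quotients $r^{-1}(e^{ir\lambda Q_j}\varphi-\varphi)$ and $r^{-1}(e^{rP_j}\varphi-\varphi)$ converge in $L^2(\mathbb{R}^n)$ to $i\lambda Q_j\varphi$ and $P_j\varphi$. For the first this follows by dominated convergence, using $|e^{ia}-1|\le|a|$ and $\xi_j\varphi\in L^2$. For the second it follows from the mean value theorem, or by Plancherel. Since $\pi_\lambda(g)$ is bounded (indeed unitary), applying it preserves this convergence, which yields the identities as $L^2$-valued derivatives. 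The pointwise-in-$\xi$ version follows even more directly. The operator $\pi_\lambda(g)$ acts on a function $\psi$ by multiplying by a fixed phase and evaluating at $\xi+y$. So the pointwise derivative in $r$ of $\pi_\lambda(g)\pi_\lambda(re_j,0)\varphi(\xi)$ is the phase times the $r$-derivative of $\pi_\lambda(re_j,0)\varphi$ evaluated at $\xi+y$, and that $r$-derivative is smooth in $r$ because $\varphi$ is Schwartz. Either interpretation gives $X_j\pi_\lambda(z,t)\varphi=i\lambda\,\pi_\lambda(z,t)Q_j\varphi$ and $Y_j\pi_\lambda(z,t)\varphi=\pi_\lambda(z,t)P_j\varphi$.
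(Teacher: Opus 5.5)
Your proof is correct and complete. The paper gives no proof of its own: it attributes the lemma to Geller and states it ``for completeness''. Its later remark in the proof of Lemma~\ref{lemB} (``As in the proof of Lemma~\ref{lemA}, this is a smooth $\mathcal{S}(\mathbb{R}^n)$-valued function \ldots'') indicates the intended route, namely direct differentiation of the explicit formula for $\pi_\lambda(z,t)\varphi(\xi)$ using the coordinate expressions $X_j=\partial_{x_j}+\frac12y_j\partial_t$ and $Y_j=\partial_{y_j}-\frac12x_j\partial_t$.

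In that computation, $\partial_{x_j}$ contributes $i\lambda(\xi_j+\frac12y_j)$ and $\frac12y_j\partial_t$ contributes $\frac{i\lambda}{2}y_j$. Together these give the factor $i\lambda(\xi_j+y_j)$, which is exactly $Q_j$ applied before the shift by $y$. For $Y_j$, the term $\frac{i\lambda}{2}x_j$ coming from the phase cancels against $-\frac12x_j\partial_t$, leaving $(\partial_j\varphi)(\xi+y)$.

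You instead use the homomorphism property and the one-parameter-subgroup description of $X_j,Y_j$. This reduces everything to the derivatives at $r=0$ of $e^{ir\lambda Q_j}$ and $e^{rP_j}$, that is, to \eqref{pivals} and the derived representation. What your route buys:
\begin{itemize}
\item the cancellation of the central correction terms happens automatically and for a structural reason;
\item the argument applies verbatim to any unitary representation and smooth vector;
\item it yields the identity as an $L^2$-norm derivative.
\end{itemize}
The direct computation needs nothing beyond calculus. It also displays joint smoothness in $(z,t)$, which is what justifies identifying the derivative along $r\mapsto g\cdot(re_j,0)$ with the coordinate operator $\partial_{x_j}+\frac12y_j\partial_t$. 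Your pointwise-in-$\xi$ remark supplies exactly this point, since the explicit formula is jointly smooth in $(x,y,t)$ for each fixed $\xi$.
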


\begin{proposition}[{\cite{LT2014}}]
\label{prop:lt23}
For $f\in\mathcal{S}(\Hn)$, $\lambda\in\mathbb{R}^*$, and $j=1,\dots,n$, the following identities hold as identities of operators on $\mathcal{S}(\mathbb{R}^n)$:
\begin{equation}
\pi_\lambda(iX_jf)=\lambda\,\pi_\lambda(f)\,Q_j, \qquad
\pi_\lambda(-Y_jf)=\pi_\lambda(f)\,P_j.
\end{equation}
\end{proposition}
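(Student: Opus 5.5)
Since $f\in\mathcal{S}(\Hn)$ and $\varphi\in\mathcal{S}(\mathbb{R}^n)$, I would prove the two identities by pairing with test vectors and moving the vector field from $f$ onto the representation. Fix $\varphi,\psi\in\mathcal{S}(\mathbb{R}^n)$ and consider the matrix coefficient
\[
h(z,t):=(\pi_\lambda(z,t)\varphi,\psi).
\]
This function is smooth on $\Hn$. Its derivatives are again matrix coefficients of the same type, with $\varphi$ replaced by products of $Q_j$'s and $P_j$'s applied to $\varphi$. So all derivatives are bounded, in particular of polynomial growth, since $|(\pi_\lambda(z,t)\phi,\psi)|\le\|\phi\|_2\|\psi\|_2$. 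Hence the integration by parts formula \eqref{ibp} applies to $f$ and $h$.

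\textbf{Computing $X_jh$ and $Y_jh$.} Lemma~\ref{lemA} is stated for vectors. Differentiating under the inner product, which is justified by dominated convergence using the explicit formula for $\pi_\lambda$ and $\psi\in\mathcal{S}$, gives
\[
X_jh=i\lambda(\pi_\lambda(z,t)Q_j\varphi,\psi),\qquad Y_jh=(\pi_\lambda(z,t)P_j\varphi,\psi).
\]
Alternatively, one can verify these directly from the one-parameter subgroup description: $\pi_\lambda(g\cdot(re_j,0))=\pi_\lambda(g)\pi_\lambda(re_j,0)$ and $\pi_\lambda(re_j,0)=e^{i\lambda rQ_j}$, while $\pi_\lambda(ire_j,0)=e^{rP_j}$. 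Differentiating at $r=0$ yields $i\lambda Q_j$ and $P_j$ on Schwartz vectors. This is the cleanest route and also reproves Lemma~\ref{lemA}.

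\textbf{Main computation.} By Fubini, since $f\in L^1$ and $h$ is bounded,
\[
(\pi_\lambda(iX_jf)\varphi,\psi)=i\int_{\Hn}X_jf\,h\,dz\,dt=-i\int_{\Hn}f\,X_jh\,dz\,dt.
\]
The middle-to-right step uses \eqref{ibp}. Substituting $X_jh$ gives
\[
-i\cdot i\lambda\int_{\Hn} f\,(\pi_\lambda(z,t)Q_j\varphi,\psi)\,dz\,dt=\lambda(\pi_\lambda(f)Q_j\varphi,\psi).
\]
Similarly,
\[
(\pi_\lambda(-Y_jf)\varphi,\psi)=\int_{\Hn} f\,Y_jh\,dz\,dt=(\pi_\lambda(f)P_j\varphi,\psi).
\]
Since $\psi$ ranges over a dense subspace, the operator identities follow on $\mathcal{S}(\mathbb{R}^n)$. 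Here $Q_j\varphi,P_j\varphi\in\mathcal{S}(\mathbb{R}^n)\subset L^2$, so the right-hand sides make sense.

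\textbf{Main obstacle.} The only delicate point is justifying \eqref{ibp} for the pair $(f,h)$ with $h$ merely bounded and smooth rather than compactly supported. I would handle it by truncation. Take $\chi_R(z,t)=\chi(z/R,t/R^2)$ with $\chi\in C_c^\infty$ equal to $1$ near $0$. Apply \eqref{ibp} to $f$ and $\chi_Rh$, which is legitimate since $\chi_Rh\in C_c^\infty$. The resulting error term is $\int f\,hX_j\chi_R$, and $|X_j\chi_R|\lesssim R^{-1}(1+|y|/R)$. So this term tends to $0$ by dominated convergence, since $f$ is rapidly decreasing.
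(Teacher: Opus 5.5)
Your proof is correct. The paper gives no proof of its own (it quotes the result from \cite{LT2014}), but your route is the one its set-up points to: pair with $h(z,t)=(\pi_\lambda(z,t)\varphi,\psi)$, compute $X_jh$ and $Y_jh$ from Lemma~\ref{lemA} via the one-parameter subgroups \eqref{pivals}, and move the vector field across with \eqref{ibp}, whose polynomial-growth clause is stated for exactly this kind of pairing. Your $\chi_R$ truncation correctly justifies that integration by parts.

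One small imprecision: only the left-invariant derivatives of $h$ are bounded. The Euclidean partials pick up polynomial factors, e.g.\ $\partial_{x_j}h=X_jh-\tfrac{i\lambda}{2}y_jh$. This does no harm, since polynomial growth is all \eqref{ibp} needs and your truncation uses only the boundedness of $h$, $X_jh$ and $Y_jh$.
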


\subsection{Right translation}

In this subsection, we discuss the right translation on the Heisenberg group and its behaviour under the group Fourier transform.

\begin{definition}
\label{def:righttrans}
Let $(z,t)=(x+iy,t)\in\Hn$, with $x,y\in\mathbb{R}^n$. For a function $f$ on $\Hn$, its right translate by $(z,t)\in\Hn$, denoted $R_{(z,t)}f$, is defined as 
\[
(R_{(z,t)}f)(w,s):=f\big((w,s)(z,t)\big),\qquad (w,s)\in\Hn.
\]
\end{definition}

For $j=1,\dots,n$, let $e_j\in\mathbb{R}^n$ be the $j$-th standard basis vector. For $s\in\mathbb{R}$, write $(se_j,0)$ for the element of $\Hn$ with $x=se_j$, $y=0$, $t=0$, and $(ise_j,0)$ for the element with $x=0$, $y=se_j$, $t=0$. Thus $R_{(se_j,0)}$ and $R_{(ise_j,0)}$ are the corresponding instances of the operator of Definition~\ref{def:righttrans}, and $s\mapsto(se_j,0)$, $s\mapsto(ise_j,0)$ are the one-parameter subgroups of $\Hn$ generated by $X_j$ and $Y_j$ respectively. A direct computation using the group law gives
\begin{equation}
\label{pivals}
\pi_\lambda(se_j,0)=e^{is\lambda Q_j},\qquad \pi_\lambda(ise_j,0)=e^{sP_j}.
\end{equation}

The group Fourier transform satisfies the following property.

\begin{proposition}[Property~3, {\cite{LT2012}}]
\label{prop:lt-prop3}
For $f\in L^1(\Hn)$ and $(z,t)\in\Hn$,
\[
\pi_\lambda(R_{(z,t)}f)=\pi_\lambda(f) \ \pi_\lambda(z,t)^*,\qquad \lambda\in\mathbb{R}^*.
\]
\end{proposition}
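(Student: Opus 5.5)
We prove Proposition~\ref{prop:lt-prop3} by a direct change of variables in the Bochner integral defining $\pi_\lambda(R_{(z,t)}f)$, using only the right invariance of Haar measure and the homomorphism property of $\pi_\lambda$. Fix $(z,t)\in\Hn$, $\lambda\in\mathbb{R}^*$ and $f\in L^1(\Hn)$, and write $g=(z,t)$. Since right translation preserves $dz\,dt$, $R_gf\in L^1(\Hn)$ and $\pi_\lambda(R_gf)$ is a well-defined bounded operator. It suffices to work weakly: for $\varphi,\psi\in L^2(\mathbb{R}^n)$,
\[
(\pi_\lambda(R_gf)\varphi,\psi)=\int_{\Hn} f\big((w,s)g\big)\,(\pi_\lambda(w,s)\varphi,\psi)\,dw\,ds,
\]
and the integrand is integrable because $|(\pi_\lambda(w,s)\varphi,\psi)|\le\|\varphi\|\|\psi\|$.

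Next substitute $h=(w,s)g$, so that $(w,s)=hg^{-1}$. By unimodularity the Lebesgue measure is right-invariant, so $dw\,ds$ becomes Haar measure in $h$. This gives
\[
\int_{\Hn} f(h)\,(\pi_\lambda(hg^{-1})\varphi,\psi)\,dh=\int_{\Hn} f(h)\,(\pi_\lambda(h)\pi_\lambda(g)^{-1}\varphi,\psi)\,dh,
\]
using that $\pi_\lambda$ is a homomorphism. Here one may either invoke that Schr\"odinger representations are representations, or verify $\pi_\lambda(g_1g_2)=\pi_\lambda(g_1)\pi_\lambda(g_2)$ directly from the formula and the group law (a routine phase check with $\tfrac12\operatorname{Im}(z\cdot\overline w)$).

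Since $\pi_\lambda$ is unitary, $\pi_\lambda(g)^{-1}=\pi_\lambda(g)^*$. Setting $\varphi'=\pi_\lambda(g)^*\varphi$, the last integral equals $(\pi_\lambda(f)\varphi',\psi)=(\pi_\lambda(f)\pi_\lambda(g)^*\varphi,\psi)$. As $\varphi,\psi$ are arbitrary, this yields the operator identity.

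The only step needing care is the homomorphism and unitarity of $\pi_\lambda$ under the stated convention, since the paper's formula should be checked to define a representation rather than an anti-representation. I would verify this by composing $\pi_\lambda(z,t)\pi_\lambda(w,s)$ on $\varphi$ and comparing phases. If it turned out to be an anti-homomorphism, the factor $\pi_\lambda(g)^{-1}$ would appear on the left instead. Everything else is Fubini-free and elementary.
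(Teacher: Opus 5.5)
Your argument is correct and is the standard one. The paper gives no proof here (it cites \cite{LT2012}), but it performs exactly your right-invariant substitution $g'=gh$ for the kernel $k_\lambda$ in the proof of Lemma~\ref{lemF}.

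The homomorphism check you deferred does go through with the paper's conventions. Writing $z=x+iy$ and $w=u+iv$, both $\pi_\lambda(z,t)\pi_\lambda(w,s)\varphi(\xi)$ and $\pi_\lambda\big((z,t)(w,s)\big)\varphi(\xi)$ equal $e^{i\lambda(t+s+(x+u)\cdot\xi+\frac12x\cdot y+u\cdot y+\frac12u\cdot v)}\varphi(\xi+y+v)$. So $\pi_\lambda$ is a genuine homomorphism, and your anti-homomorphism caveat can be dropped.
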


\section{Setting the Heisenberg group analogues}
\label{sec:ddhn}
In this section, we build the background in the Heisenberg group setting which is required to state our main results.

\subsection{The Dirac delta property}

The following are adaptations of the Dirac delta family and Dirac delta property of \cite{Krukowski2} to $\Hn$.

\begin{definition}[Heisenberg Dirac delta family]
\label{def:ddf}
Let $N_0$ denote the collection of open neighbourhoods of the identity $(0,0) \in \Hn$. A family $(\delta_U)_{U \in N_0} \subset C_c(\Hn)$ is called a \emph{Heisenberg Dirac delta family} if every $\delta_U$
\begin{enumerate}
    \item[(i)] has support contained in $U$,
    \item[(ii)] is nonnegative,
    \item[(iii)] satisfies the equality
    \[
    \int_{\Hn} \delta_U(z,t) \, dz\, dt = 1.
    \]
\end{enumerate}
\end{definition}

Note that the above definition differs from the Dirac delta sequences and families of \cite{Krukowski2} as no symmetry ($\delta_U(x^{-1})=\delta_U(x)$) is imposed. This relaxation does not affect the arguments given below.

\begin{definition}[Heisenberg Dirac delta property]
\label{def:ddp}
A linear map $B$ assigning to each $f\in L^1(\Hn)$ an operator-valued function $\lambda\mapsto B(f)(\lambda)\in\mathcal{B}(L^2(\mathbb{R}^n))$ satisfies the \emph{Heisenberg Dirac delta property} if there is a Heisenberg Dirac delta family $(\delta_U)_{U\in N_0}$ on $\Hn$ such that, for a.e.\ $\lambda\in\mathbb{R}^*$,
\[
\lim_{U\to(0,0)}B(\delta_U)(\lambda)=I
\]
strongly on $L^2(\mathbb{R}^n)$.
\end{definition}

We now study the behaviour of a Heisenberg Dirac delta family under the group Fourier transform.

\begin{proposition}
\label{prop:pi-ddp}
The group Fourier transform satisfies the Heisenberg Dirac delta property with respect to any Heisenberg Dirac delta family.
\end{proposition}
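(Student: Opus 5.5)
We must show that for every $\lambda\in\mathbb{R}^*$ and every $\varphi\in L^2(\mathbb{R}^n)$, $\pi_\lambda(\delta_U)\varphi\to\varphi$ in $L^2(\mathbb{R}^n)$ as $U\to(0,0)$. The plan is to use properties (ii) and (iii) of Definition~\ref{def:ddf} to write
\[
\pi_\lambda(\delta_U)\varphi-\varphi=\int_{\Hn}\delta_U(z,t)\,\big(\pi_\lambda(z,t)\varphi-\varphi\big)\,dz\,dt,
\]
understood as a Bochner integral in $L^2(\mathbb{R}^n)$, or weakly by pairing against an arbitrary $\psi$. The map $(z,t)\mapsto\pi_\lambda(z,t)\varphi$ is continuous in norm because $\pi_\lambda$ is a strongly continuous unitary representation. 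We then estimate
\[
\|\pi_\lambda(\delta_U)\varphi-\varphi\|_2\le\int_{\Hn}\delta_U(z,t)\,\|\pi_\lambda(z,t)\varphi-\varphi\|_2\,dz\,dt\le\sup_{(z,t)\in U}\|\pi_\lambda(z,t)\varphi-\varphi\|_2,
\]
which uses nonnegativity, unit mass and $\operatorname{supp}\delta_U\subset U$.

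Given $\varepsilon>0$, strong continuity at the identity gives a neighbourhood $U_0\in N_0$ on which $\|\pi_\lambda(z,t)\varphi-\varphi\|_2<\varepsilon$. Then every $V\in N_0$ with $V\subset U_0$ satisfies $\|\pi_\lambda(\delta_V)\varphi-\varphi\|_2<\varepsilon$. This is exactly the limit $U\to(0,0)$ in the sense of the net of neighbourhoods used in Definition~\ref{def:ddp}. It holds for every $\lambda\in\mathbb{R}^*$, so in particular for a.e.\ $\lambda$.

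The main point to verify carefully is the strong continuity of $\pi_\lambda$. I would check it from the explicit formula. The factor $e^{i\lambda t}$ is harmless. For the $z$-dependence, first take $\varphi\in C_c(\mathbb{R}^n)$: there $\pi_\lambda(z)\varphi-\varphi$ tends to $0$ in $L^2$ as $z\to0$ by dominated convergence, using uniform continuity of $\varphi$ for the translation $\varphi(\xi+y)$ and the pointwise convergence of the phase $e^{i\lambda(x\cdot\xi+\frac12x\cdot y)}\to1$ on the compact support. Since $\|\pi_\lambda(z,t)\|_{\op}=1$, an $\varepsilon/3$ argument extends this to all $\varphi\in L^2(\mathbb{R}^n)$. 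Alternatively, one may simply cite the standard fact that the Schr\"odinger representations are strongly continuous unitary representations (\cite{Folland}, \cite{Thangavelu}).
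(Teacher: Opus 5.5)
Your proposal is correct and is essentially the paper's proof: the paper also writes $\pi_\lambda(\delta_U)\varphi-\varphi$ as the $\delta_U$-average of $\pi_\lambda(g)\varphi-\varphi$, bounds it by $\sup_{g\in U}\|\pi_\lambda(g)\varphi-\varphi\|_2$, and concludes from strong continuity of $\pi_\lambda$. Your check of strong continuity from the explicit Schr\"odinger formula (density of $C_c(\mathbb{R}^n)$ plus unitarity), and your explicit handling of the net $U\to(0,0)$, are correct details that the paper leaves implicit.
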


\begin{proof}
For any Heisenberg Dirac delta family $(\delta_U)$, strong continuity of $\pi_\lambda$ together with $\int_{\Hn}\delta_U=1$ and $\delta_U\ge0$ gives 
$$\pi_\lambda(\delta_U)=\int_{\Hn}\delta_U(z,t)\pi_\lambda(z,t)\,dz\,dt\to\pi_\lambda(0,0)=I,$$
strongly as $U\to(0,0)$. Indeed, for any $\varphi\in L^2(\mathbb{R}^n)$, we have
\[
\|\pi_\lambda(\delta_U)\varphi-\varphi\|_2\le\int_{\Hn}\delta_U(g)\,\|\pi_\lambda(g)\varphi-\varphi\|_2\,dg\le\sup_{g\in U}\|\pi_\lambda(g)\varphi-\varphi\|_2 ,
\]
which tends to $0$ as $U\to(0,0)$.
\end{proof}

\subsection{Continuous linear maps on $L^1(\Hn)$ and their kernels}

\begin{definition}[Setting]
\label{def:clt}
Throughout, $B$ denotes a continuous linear map
\[
B:L^1(\Hn)\longrightarrow L^\infty\big(\mathbb{R}^*,\mathcal{B}(L^2(\mathbb{R}^n)),d\mu\big),\qquad d\mu(\lambda)=(2\pi)^{-n-1}|\lambda|^n\,d\lambda.
\]
\end{definition}
Only the null sets of $d\mu$ play a role below, and these are the null sets of Lebesgue measure on $\mathbb{R}^*$. The space $L^\infty(\mathbb{R}^*,\mathcal{B}(L^2(\mathbb{R}^n)),d\mu)$ is understood in the weak sense, that is, $\lambda\mapsto(B(f)(\lambda)\varphi,\psi)$ is measurable for all $\varphi,\psi\in L^2(\mathbb{R}^n)$. Measurability in $\lambda$ is not used in any argument below.

The following discussion lays the foundation for our understanding and formulation of the main results.

\begin{remark}[Choice of representatives]\label{rem:reps}
For each $f\in L^1(\Hn)$, the function $B(f)(\cdot)$ is a priori defined only almost everywhere,
with an exceptional null set depending on $f$. We normalise this in the standard way: fixing a
countable $\mathbb{Q}[i]$-linear subspace $D_0\subset L^1(\Hn)$, dense in $L^1(\Hn)$, we pass
to the bounded extension of $f\mapsto B(f)(\lambda)$ from $D_0$ to $L^1(\Hn)$. This gives a set
$\Lambda\subset\mathbb{R}^*$, with $\mathbb{R}^*\setminus\Lambda$ of Lebesgue measure zero, and a
value $B(f)(\lambda)$ defined for \emph{every} $f\in L^1(\Hn)$ and every $\lambda\in\Lambda$, satisfying the following:
\begin{enumerate}
\item[(i)] the map $f\mapsto B(f)(\lambda)$ is complex-linear and $\|B(f)(\lambda)\|_{\op}\le C\|f\|_1$,
for every $\lambda\in\Lambda$,
\item[(ii)] for every $f\in L^1(\Hn)$, the normalised value $B(f)(\lambda)$ agrees with the
original, a priori a.e.-defined $B(f)(\cdot)$, for a.e.\ $\lambda\in\Lambda$,
\item[(iii)] the normalised value $B(f)(\lambda)$ is independent of the choice of $D_0$, up to a
further null set of $\lambda$.
\end{enumerate}
As this is a consequence of the bound $\|B(f)(\lambda)\|_{\op}\le C\|f\|_1$ together with
density of $D_0$, we do not elaborate it here. From now on $B(f)(\lambda)$ denotes this
normalised value. In particular, the Dirac delta property does not depend on the choice of $D_0$,
and is a hypothesis on the normalised $B$.
\end{remark}

\begin{remark}[The differentiation and shift properties at a fixed $\lambda$]\label{rem:fixed-lambda}
The differentiation and shift properties below are stated for a.e.\ $\lambda$ and each $f$. We use the normalisation of Remark~\ref{rem:reps}, together with continuity of $B$ and of the
relevant group actions. A density argument then forces each property to hold for
\emph{every} $f$ simultaneously (and, for the shift property, every $s$), at every $\lambda$ in a
further conull subset of $\Lambda$. We discard the exceptional null set from $\Lambda$ and still denote the resulting set by
$\Lambda$. From now on the differentiation property (\ref{diffprop}) and the shift property (\ref{shiftprop}) hold at every
$\lambda\in\Lambda$, for all $f$ and all $s$.
\end{remark}

First we obtain that any map satisfying Definition~\ref{def:clt} is an integral transform, with the kernel satisfying certain regularity conditions. 

\begin{lemma}[Kernel representation from continuity]
\label{lemE}
Let $B$ be as in Definition~\ref{def:clt}. Then for every $\lambda\in\Lambda$ there is a bounded map $k_\lambda:\Hn\to\mathcal{B}(L^2(\mathbb{R}^n))$ such that
\[
(B(f)(\lambda)\varphi,\psi)=\int_{\Hn}f(z,t)\,(k_\lambda(z,t)\varphi,\psi)\,dz\,dt
\]
for every $f\in L^1(\Hn)$ and $\varphi,\psi\in L^2(\mathbb{R}^n)$.
\end{lemma}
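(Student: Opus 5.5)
Fix $\lambda\in\Lambda$. By Remark~\ref{rem:reps}(i), for each pair $\varphi,\psi\in L^2(\mathbb{R}^n)$ the map $f\mapsto(B(f)(\lambda)\varphi,\psi)$ is a bounded linear functional on $L^1(\Hn)$, with norm at most $C\|\varphi\|_2\|\psi\|_2$. Since Lebesgue measure on $\Hn\cong\mathbb{R}^{2n+1}$ is $\sigma$-finite, the duality $L^1(\Hn)^*\cong L^\infty(\Hn)$ gives a function $b_{\varphi,\psi}\in L^\infty(\Hn)$, unique a.e., with
\[
(B(f)(\lambda)\varphi,\psi)=\int_{\Hn}f\,b_{\varphi,\psi}\,dz\,dt,\qquad \|b_{\varphi,\psi}\|_\infty\le C\|\varphi\|_2\|\psi\|_2 .
\]
Uniqueness implies that $(\varphi,\psi)\mapsto b_{\varphi,\psi}$ is sesquilinear as a map into $L^\infty$, i.e.\ as equivalence classes. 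The remaining task is to choose pointwise representatives so that, for every fixed $(z,t)$, the map $(\varphi,\psi)\mapsto b_{\varphi,\psi}(z,t)$ is a bounded sesquilinear form. The operator $k_\lambda(z,t)$ is then defined through the Riesz representation theorem for bounded sesquilinear forms.

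This choice of representatives is the main obstacle, because there are uncountably many pairs and the exceptional null sets depend on the pair. I would handle it with a countable skeleton. Fix a countable orthonormal basis $\{e_m\}$ of $L^2(\mathbb{R}^n)$ and let $E$ be its $\mathbb{Q}[i]$-linear span. For $\varphi,\psi\in E$, set $b_{\varphi,\psi}$ to be a fixed representative of $\sum\alpha_m\overline{\beta_{m'}}\,b_{e_m,e_{m'}}$; this is exactly sesquilinear over $\mathbb{Q}[i]$ at every point, since it is built from the countably many representatives $b_{e_m,e_{m'}}$. The pairs in $E\times E$ are countable, and each satisfies $|b_{\varphi,\psi}(z,t)|\le C\|\varphi\|_2\|\psi\|_2$ off a null set. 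So there is a single null set $N\subset\Hn$ outside of which this bound holds for all $\varphi,\psi\in E$ simultaneously.

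For $(z,t)\notin N$, the form on $E\times E$ is bounded and $\mathbb{Q}[i]$-sesquilinear. It extends uniquely by density to a bounded $\mathbb{C}$-sesquilinear form on $L^2\times L^2$; continuity upgrades rational scalars to complex ones. By Riesz, this form equals $(k_\lambda(z,t)\varphi,\psi)$ for an operator with $\|k_\lambda(z,t)\|_{\op}\le C$. On $N$ set $k_\lambda(z,t)=0$. Then $k_\lambda$ is bounded everywhere.

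To verify the identity for general $\varphi,\psi$, take $\varphi_j\to\varphi$ and $\psi_j\to\psi$ in $E$. The identity holds for each pair $(\varphi_j,\psi_j)$, because $(k_\lambda\varphi_j,\psi_j)=b_{\varphi_j,\psi_j}$ a.e. Both sides converge as $j\to\infty$. The left side converges since $B(f)(\lambda)$ is bounded. The right side converges by dominated convergence, using the uniform bound $C\|\varphi_j\|\|\psi_j\|\,|f|$ and pointwise convergence of $(k_\lambda(z,t)\varphi_j,\psi_j)$.

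Measurability of $(z,t)\mapsto(k_\lambda(z,t)\varphi,\psi)$ comes with this limit argument, since it is a pointwise limit of measurable functions. The integral is therefore well defined.
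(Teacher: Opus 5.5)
Your proposal is correct and follows essentially the same route as the paper. Both arguments use $L^1$--$L^\infty$ duality for each pair $(\varphi,\psi)$, then a countable $\mathbb{Q}[i]$-span of an orthonormal basis (Hermite functions in the paper) to build pointwise sesquilinear representatives off a single null set, then density extension and Riesz to define $k_\lambda(z,t)$, with $k_\lambda:=0$ on the null set. The differences are cosmetic: you reach general $\varphi,\psi$ by dominated convergence and get measurability as a pointwise limit, whereas the paper gets measurability from uniform convergence and then compares two bounded sesquilinear forms that agree on the dense set $\mathcal{E}\times\mathcal{E}$.
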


\begin{proof}
Fix $\lambda\in\Lambda$. For $\varphi,\psi\in L^2(\mathbb{R}^n)$ the map $f\mapsto(B(f)(\lambda)\varphi,\psi)$ is linear and satisfies
\begin{align*}
|(B(f)(\lambda)\varphi,\psi)| &\le\|B(f)(\lambda)\|_{\op}\|\varphi\|_2\|\psi\|_2\\
&\le C\|f\|_1\|\varphi\|_2\|\psi\|_2,
\end{align*}
where $C$ is the constant of Definition~\ref{def:clt}, so it defines a continuous linear functional
on $L^1(\Hn)$.
By $L^1$--$L^\infty$ duality on $\Hn$, there is $F_\lambda(\cdot;\varphi,\psi)\in L^\infty(\Hn)$ such that
$$
 (B(f)(\lambda)\varphi,\psi)=\int_{\Hn} f(z,t)\,F_\lambda((z,t);\varphi,\psi)\,dz\,dt,
$$
and
$$\|F_\lambda(\cdot;\varphi,\psi)\|_\infty\le C\|\varphi\|_2\|\psi\|_2.$$
This representative is fixed only up to a null set depending on $\varphi$ and $\psi$, and the
pair $\varphi,\psi$ ranges over an uncountable set, so we now assemble a single kernel,
independent of $\varphi$ and $\psi$, following the method of \cite{LT2012}.

Let $\{\Phi_\alpha\}_{\alpha\in\mathbb{N}^n}$ be the family of Hermite functions, which form an orthonormal basis for $L^2(\mathbb{R}^n)$.  For each pair $\alpha,\beta\in \mathbb{N}^n,$ fix a measurable representative $F_{\alpha,\beta}$ of $F_\lambda(\cdot;\Phi_\alpha,\Phi_\beta)$.

Set
\[
\mathcal{E} := \operatorname{span}_{\mathbb{Q}[i]}\{\Phi_\alpha : \alpha\in\mathbb{N}^n\},
\qquad \overline{\mathcal{E}} = L^2(\mathbb{R}^n).
\]
For $\varphi=\sum a_\alpha\Phi_\alpha,\ \psi=\sum b_\beta\Phi_\beta\in\mathcal{E}$ (finite sums),
sesquilinearity of $F_\lambda(\cdot;\cdot,\cdot)$ gives, for a.e.\ $(z,t)$,
\[
F_\lambda((z,t);\varphi,\psi)=\sum_{\alpha,\beta} a_\alpha\bar{b}_\beta F_{\alpha,\beta}(z,t),
\]
\[
|F_\lambda((z,t);\varphi,\psi)|\le C\|\varphi\|_2\|\psi\|_2.
\]
Let $N\subset\Hn$ be the union, over the countable set $\mathcal{E}\times\mathcal{E}$, of the
exceptional null sets in these two identities. Then $N$ is a null set. Set $E:=\Hn\setminus N$. On $E$, define
\[
S_{(z,t)}(\varphi,\psi):=\sum_{\alpha,\beta} a_\alpha\bar{b}_\beta F_{\alpha,\beta}(z,t),
\qquad \varphi,\psi\in\mathcal{E}.
\]
Then, for every $(z,t)\in E$ and every $\varphi,\psi\in\mathcal{E}$,
\begin{equation}\label{eq:S-agrees}
\left\{
\begin{array}{l}
S_{(z,t)}(\varphi,\psi) = F_\lambda((z,t);\varphi,\psi), \\[4pt]
|S_{(z,t)}(\varphi,\psi)| \le C\|\varphi\|_2\|\psi\|_2.
\end{array}
\right.
\end{equation}
Fix $(z,t)\in E$. By \eqref{eq:S-agrees}, $S_{(z,t)}$ is a sesquilinear form on $\mathcal{E}$ with
\[
\|S_{(z,t)}\| := \sup_{\substack{\varphi,\psi\in\mathcal{E}\\ \|\varphi\|_2,\|\psi\|_2\le 1}} |S_{(z,t)}(\varphi,\psi)| \le C.
\]
Since $\mathcal{E}$ is dense in $L^2(\mathbb{R}^n)$, the map $S_{(z,t)}$ extends uniquely to a bounded
sesquilinear form on $L^2(\mathbb{R}^n)\times L^2(\mathbb{R}^n)$ of the same norm. By the Riesz
representation theorem there is $k_\lambda(z,t)\in\mathcal{B}(L^2(\mathbb{R}^n))$ with
\[
\|k_\lambda(z,t)\|_{\op}\le C, \qquad
(k_\lambda(z,t)\varphi,\psi)=S_{(z,t)}(\varphi,\psi), \quad \textrm{for \ all \ } \varphi,\psi\in L^2(\mathbb{R}^n).
\]
For $(z,t)\notin E$, set $k_\lambda(z,t):=0$.

\noindent \textit{Measurability.} Fix $\varphi,\psi\in L^2(\mathbb{R}^n)$ and $\varphi_m,\psi_m\in\mathcal{E}$
with $\varphi_m\to\varphi$, $\psi_m\to\psi$ in $L^2(\mathbb{R}^n)$. For each $m$, the map
\[
(z,t) \longmapsto (k_\lambda(z,t)\varphi_m,\psi_m) = \sum_{\alpha,\beta} a_\alpha^{(m)}\bar{b}_\beta^{(m)} F_{\alpha,\beta}(z,t), \qquad (z,t)\in E,
\]
is a finite linear combination of the measurable functions $F_{\alpha,\beta}$, hence measurable.
Moreover,
\begin{align*}
\bigl|(k_\lambda(z,t)\varphi_m,\psi_m)-(k_\lambda(z,t)\varphi,\psi)\bigr|
&\le \|k_\lambda(z,t)\|_{\op}\bigl(\|\varphi_m-\varphi\|_2\|\psi_m\|_2+\|\varphi\|_2\|\psi_m-\psi\|_2\bigr) \\
&\le C\bigl(\|\varphi_m-\varphi\|_2\|\psi_m\|_2+\|\varphi\|_2\|\psi_m-\psi\|_2\bigr),
\end{align*}
uniformly in $(z,t)\in E$. So $(k_\lambda(\cdot)\varphi_m,\psi_m) \to (k_\lambda(\cdot)\varphi,\psi)$
uniformly on $E$ as $m\to\infty$. Since uniform limit of measurable functions is measurable, we get that
$(k_\lambda(\cdot)\varphi,\psi)$ is measurable on $\Hn$, for every $\varphi,\psi\in L^2(\mathbb{R}^n)$.
Thus $k_\lambda$ is weakly measurable and bounded on $\Hn$.

\noindent \textit{Kernel representation.} Fix $f\in L^1(\Hn)$. The two sesquilinear forms on
$L^2(\mathbb{R}^n)\times L^2(\mathbb{R}^n)$,
\[
(\varphi,\psi) \longmapsto (B(f)(\lambda)\varphi,\psi), \qquad
(\varphi,\psi) \longmapsto \int_{\Hn} f(z,t)\,(k_\lambda(z,t)\varphi,\psi)\,dz\,dt,
\]
are bounded, and agree on $\mathcal{E}\times\mathcal{E}$ by construction. Since $\mathcal{E}$ is
dense in $L^2(\mathbb{R}^n)$, they agree on $L^2(\mathbb{R}^n)\times L^2(\mathbb{R}^n)$. This proves
the stated representation for every $\varphi,\psi\in L^2(\mathbb{R}^n)$.
\end{proof}

\section{The differentiation-property characterisation}
\label{sec:diff}

In this section, we prove the differentiation-based characterisation of the Fourier transform on the Heisenberg group.

\begin{definition}
\label{def:hdp}
Let $B$ be as in Definition~\ref{def:clt}. We say $B$ satisfies the \emph{Heisenberg differentiation property} if, for every $j=1,\dots,n$ and every $f\in\mathcal{S}(\Hn)$, the following identities hold for a.e.\ $\lambda\in\mathbb{R}^*$:
\begin{equation}
\label{diffprop}
B(iX_jf)(\lambda)=\lambda\,B(f)(\lambda)\,Q_j, \qquad
B(-Y_jf)(\lambda)=B(f)(\lambda)\,P_j .
\end{equation}
Here both sides of each identity are understood as operators applied to every $\varphi\in\mathcal{S}(\mathbb{R}^n)$. By Proposition~\ref{prop:lt23}, the group Fourier transform itself satisfies this property.
\end{definition}

\begin{theorem}
\label{thm:main-diff}
Let $B:L^1(\Hn)\to L^\infty(\mathbb{R}^*,\mathcal{B}(L^2(\mathbb{R}^n)),d\mu)$ be a continuous linear map satisfying the Heisenberg differentiation property for every $j=1,\dots,n$, and the Heisenberg Dirac delta property. Then
\[
B(f)(\lambda)=\pi_\lambda(f)\qquad\text{for every }f\in L^1(\Hn)\text{ and a.e.\ }\lambda\in\mathbb{R}^*.
\]
\end{theorem}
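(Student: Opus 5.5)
Fix $\lambda\in\Lambda$ and use Lemma~\ref{lemE} to write $B(f)(\lambda)$ through the bounded weakly measurable kernel $k_\lambda$. The plan is to show that $k_\lambda(z,t)=\pi_\lambda(z,t)$ for a.e.\ $(z,t)$; the Dirac delta property will fix the normalisation. To avoid relying on differentiability of $k_\lambda$, we factor out the known answer. Define the bounded weakly measurable map
\[
M_\lambda(z,t):=k_\lambda(z,t)\,\pi_\lambda(z,t)^*.
\]
This gives $k_\lambda=M_\lambda\pi_\lambda$. The goal becomes showing that $M_\lambda$ is a.e.\ equal to a constant operator $A_\lambda$, and then that $A_\lambda=I$.

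\emph{Step 1 (distributional equations).} Fix $\varphi,\psi\in\mathcal{S}(\mathbb{R}^n)$ and $f\in C_c^\infty(\Hn)$. Apply \eqref{diffprop}, rewrite the left side by the kernel representation, and integrate by parts via \eqref{ibp-ext}. This shows that, in the sense of distributions on $\Hn$,
\[
X_j\bigl(k_\lambda\varphi,\psi\bigr)=\bigl(k_\lambda(i\lambda Q_j)\varphi,\psi\bigr),\qquad Y_j\bigl(k_\lambda\varphi,\psi\bigr)=\bigl(k_\lambda P_j\varphi,\psi\bigr).
\]
The signs are chosen to match Lemma~\ref{lemA}, so $k_\lambda$ satisfies the same first-order system as $\pi_\lambda$. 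Next, replace $\varphi$ by $\pi_\lambda(z,t)^*\varphi$ and use Lemma~\ref{lemA} together with the Leibniz rule. One can cleanly test against $f\,\overline{\pi_\lambda(\cdot)^*}$-type functions, since matrix coefficients of $\pi_\lambda$ with Schwartz vectors are smooth, with derivatives given by Lemma~\ref{lemA}. The result should be that $X_j(M_\lambda\varphi,\psi)=0$ and $Y_j(M_\lambda\varphi,\psi)=0$ distributionally, for all $j$. The commutation relations \eqref{CCR} are exactly what make the terms cancel, since $\pi_\lambda(g)Q_j\pi_\lambda(g)^*$ and $Q_j$ differ by scalars that the Leibniz terms absorb.

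\emph{Step 2 (constancy).} From \eqref{bracket}, $[X_j,Y_j]=-\partial_t$, so $\partial_t$ also annihilates the distribution $u=(M_\lambda\varphi,\psi)$. Since $X_j$, $Y_j$ and $\partial_t$ span all coordinate derivatives, $\partial_{x_j}u=\partial_{y_j}u=\partial_tu=0$. Hence $u$ is a.e.\ constant on the connected space $\Hn$. To obtain a common null set, do this for $\varphi,\psi$ in the countable set $\mathcal{E}$. The bound $\|M_\lambda\|\le C$ then yields an operator $A_\lambda$ with $M_\lambda(z,t)=A_\lambda$ a.e. It follows that $k_\lambda(z,t)=A_\lambda\pi_\lambda(z,t)$ a.e., and therefore $B(f)(\lambda)=A_\lambda\pi_\lambda(f)$ for every $f\in L^1(\Hn)$.

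\emph{Step 3 (normalisation).} By the Dirac delta property and Proposition~\ref{prop:pi-ddp}, we have $B(\delta_U)(\lambda)=A_\lambda\pi_\lambda(\delta_U)\to A_\lambda$ strongly, while also $B(\delta_U)(\lambda)\to I$ for a.e.\ $\lambda$. Hence $A_\lambda=I$.

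I expect the main obstacle to be Step 1: justifying the product rule for the distribution $(k_\lambda\pi_\lambda^*\varphi,\psi)$ when $k_\lambda$ is merely bounded measurable. The vector $\pi_\lambda^*\varphi$ varies with the point, so the identity cannot be applied pointwise in a naive way. I would handle this by expanding $\pi_\lambda(z,t)^*\varphi$ in Hermite functions, or better, by moving all derivatives onto the smooth test function $f(z,t)\,\pi_\lambda(z,t)^*$ acting on $\varphi$. Concretely, I would apply \eqref{diffprop} to test functions of the form $g(z,t)=f(z,t)\,(\pi_\lambda(z,t)^*\varphi,\eta)$ summed over a basis, and verify the cancellation at the level of integrals, using that $\mathcal{S}(\mathbb{R}^n)$ is invariant under $\pi_\lambda(z,t)$.
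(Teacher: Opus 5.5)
Your argument is correct, and it reaches the same intermediate conclusion as the paper: $k_\lambda=A_\lambda\pi_\lambda$ a.e., followed by the identical Dirac delta normalisation. The difference is how you handle the lack of regularity of $k_\lambda$.

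The paper first mollifies $k_\lambda$ by left convolution with $\eta_\varepsilon$, which commutes with the left-invariant fields. It then solves the resulting classical system in Lemma~\ref{lemB}, in two stages: it separates the $t$-dependence via $[X_j,Y_j]=-\partial_t$ and the CCR, and then shows that $\rho(z)\pi_\lambda(z)^{-1}$ has zero derivative. Finally it removes the smoothing by a diagonal subsequence and weak limits (Lemma~\ref{lem:limit}).

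You instead twist by $\pi_\lambda^*$ before doing any analysis and work purely distributionally. $M_\lambda=k_\lambda\pi_\lambda^*$ is annihilated by all $X_j,Y_j$, hence by $\partial_t$, hence is a.e. constant. This removes the need for the mollifier, Lemma~\ref{lem:smooth} and the limiting argument, and it treats the central and $z$ directions uniformly. The price is that you must prove by hand the Leibniz rule for the distribution $(k_\lambda\pi_\lambda^*\varphi,\psi)$. The paper avoids this because its Step~3 Leibniz computation is carried out on smooth objects.

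Your Hermite-expansion plan does work. Test $X_j(k_\lambda\Phi_\gamma,\psi)=i\lambda(k_\lambda Q_j\Phi_\gamma,\psi)$ against $\chi\,c_\gamma\in C_c^\infty(\Hn)$, where $c_\gamma(g)=(\pi_\lambda(g)^*\varphi,\Phi_\gamma)$, and sum over $\gamma$. The one step that needs an explicit estimate is interchanging the $\gamma$-sum with the integral in the terms containing the unbounded $Q_j$ or $P_j$. Write $\Pi_N$ for the projection onto $\mathrm{span}\{\Phi_\gamma:|\gamma|\le N\}$ and $H=-\Delta+|\xi|^2$. Then
\[
\|Q_j\Pi_N v\|_2\le C\|(H+1)^{1/2}v\|_2 ,
\]
and similarly for $P_j$, by writing $Q_j,P_j$ through the annihilation and creation operators. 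Apply this with $v=\pi_\lambda(g)^*\varphi$, whose $(H+1)^{1/2}$-norm is locally bounded in $g$. This gives both the pointwise convergence $Q_j\Pi_N v\to Q_j v$ and the domination needed for the interchange.

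One small correction to your heuristic: the CCR are not what make the terms cancel in Step~1. Taking adjoints in Lemma~\ref{lemA} gives exactly
\[
X_j[\pi_\lambda(g)^*\varphi]=-i\lambda Q_j\pi_\lambda(g)^*\varphi,\qquad Y_j[\pi_\lambda(g)^*\varphi]=-P_j\pi_\lambda(g)^*\varphi .
\]
These cancel the terms $i\lambda k_\lambda Q_j$ and $k_\lambda P_j$ directly, with no scalar correction. The noncommutative structure enters your proof only through the bracket $[X_j,Y_j]=-\partial_t$ in Step~2.
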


The proof of Theorem~\ref{thm:main-diff} is given at the end of this section.\\

First, we analyse the distributional form of the differentiation property.

Fix $\lambda\in\Lambda$ throughout this section, and write $k_\lambda$ for the kernel of $B$ furnished by Lemma~\ref{lemE}.

\begin{lemma}
\label{lemdist}
Let $B$ satisfy the Heisenberg differentiation property. Then, for every $j=1,\dots,n$, $\varphi\in\mathcal{S}(\mathbb{R}^n)$ and $\psi\in L^2(\mathbb{R}^n)$,
\begin{equation}
\label{distid}
X_j\big[k_\lambda(\cdot;\varphi,\psi)\big]=i\lambda\,k_\lambda(\cdot;Q_j\varphi,\psi),\qquad
Y_j\big[k_\lambda(\cdot;\varphi,\psi)\big]=k_\lambda(\cdot;P_j\varphi,\psi),
\end{equation}
where $k_\lambda(\cdot;\varphi,\psi):=(k_\lambda(\cdot)\varphi,\psi)$ and the derivatives are understood in the sense of \eqref{ibp-ext}.
\end{lemma}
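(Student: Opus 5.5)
The plan is to test the differentiation property against $f=h\in C_c^\infty(\Hn)$, which lies in $\mathcal{S}(\Hn)$, and rewrite both sides through the kernel representation of Lemma~\ref{lemE}. Fix $\lambda\in\Lambda$, $j$, $\varphi\in\mathcal{S}(\mathbb{R}^n)$ and $\psi\in L^2(\mathbb{R}^n)$. By Remark~\ref{rem:fixed-lambda}, \eqref{diffprop} holds at this $\lambda$ for every $f\in\mathcal{S}(\Hn)$. Taking $f=h$, applying the first identity to $\varphi$, and pairing with $\psi$ gives
\[
(B(iX_jh)(\lambda)\varphi,\psi)=\lambda\,(B(h)(\lambda)Q_j\varphi,\psi).
\]
Here $Q_j\varphi\in\mathcal{S}(\mathbb{R}^n)\subset L^2(\mathbb{R}^n)$, and $X_jh\in C_c^\infty(\Hn)\subset L^1(\Hn)$, so Lemma~\ref{lemE} applies to both sides. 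It turns the identity into
\[
i\int_{\Hn}(X_jh)\,k_\lambda(\cdot;\varphi,\psi)\,dz\,dt=\lambda\int_{\Hn}h\,k_\lambda(\cdot;Q_j\varphi,\psi)\,dz\,dt .
\]

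Next, divide by $i$ and compare with the definition \eqref{ibp-ext} of the distributional derivative. The function $k_\lambda(\cdot;\varphi,\psi)$ is bounded and measurable, because $\|k_\lambda(z,t)\|_{\op}\le C$ and it is weakly measurable by Lemma~\ref{lemE}. It is therefore in $L^1_{\mathrm{loc}}(\Hn)$, and its distributional derivative $X_j[k_\lambda(\cdot;\varphi,\psi)]$ is characterised by
\[
\int k_\lambda(\cdot;\varphi,\psi)\,X_jh=-\int X_j[k_\lambda(\cdot;\varphi,\psi)]\,h .
\]
The displayed identity reads
\[
\int k_\lambda(\cdot;\varphi,\psi)\,X_jh=-i\lambda\int k_\lambda(\cdot;Q_j\varphi,\psi)\,h
\]
for all $h\in C_c^\infty(\Hn)$. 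Hence $X_j[k_\lambda(\cdot;\varphi,\psi)]=i\lambda\,k_\lambda(\cdot;Q_j\varphi,\psi)$ as distributions. Since the right side is a bounded function, the distributional derivative is in fact a locally integrable function.

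The $Y_j$ identity is handled in the same way. Pairing $B(-Y_jh)(\lambda)\varphi=B(h)(\lambda)P_j\varphi$ with $\psi$ and using Lemma~\ref{lemE} gives
\[
-\int (Y_jh)\,k_\lambda(\cdot;\varphi,\psi)=\int h\,k_\lambda(\cdot;P_j\varphi,\psi).
\]
By \eqref{ibp-ext}, the left side equals $\int Y_j[k_\lambda(\cdot;\varphi,\psi)]\,h$. This yields $Y_j[k_\lambda(\cdot;\varphi,\psi)]=k_\lambda(\cdot;P_j\varphi,\psi)$.

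The only delicate point is justifying that \eqref{diffprop} holds at the fixed $\lambda$ simultaneously for all test functions $h$ and all $\varphi$, rather than for a.e.\ $\lambda$ depending on $h$. This is exactly what the normalisation in Remarks~\ref{rem:reps} and~\ref{rem:fixed-lambda} provides. If one wanted to avoid relying on it in full, one could verify the identity for $h$ in a countable dense subset of $C_c^\infty(\Hn)$, taken in the $C^1$ topology with supports in fixed compacts, and for $\varphi$ in a countable dense set. Both sides are continuous in these data because $k_\lambda$ is bounded, so the identity then extends to all $h$ and $\varphi$.
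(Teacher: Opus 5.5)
Your proposal is correct and is essentially the paper's proof. You fix $\lambda\in\Lambda$ via Remark~\ref{rem:fixed-lambda}, test \eqref{diffprop} against $h\in C_c^\infty(\Hn)$, rewrite both sides with the kernel representation of Lemma~\ref{lemE}, and read off the distributional derivatives from \eqref{ibp-ext}, with the signs handled correctly. Your closing remark about countable dense families of $h$ and $\varphi$ is a valid safeguard, but it is not needed given the normalisation the paper adopts.
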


\begin{proof}
Here \eqref{diffprop} is a hypothesis on $B$ only, and no differentiability of $k_\lambda$ is assumed, and the derivatives in \eqref{distid} are distributional. Since $\lambda\in\Lambda$, \eqref{diffprop}
holds at $\lambda$ for every $f\in\mathcal{S}(\Hn)$ (Remark~\ref{rem:fixed-lambda}). Applying \eqref{diffprop} to $f=\chi\in C_c^\infty(\Hn)\subset\mathcal{S}(\Hn)$ and using the
kernel representation of Lemma~\ref{lemE}, the identity 
\[
(B(iX_j\chi)(\lambda)\varphi,\psi) =
\lambda(B(\chi)(\lambda)Q_j\varphi,\psi)
\]
becomes
\[
i\int_{\Hn}(X_j\chi)(g)\,k_\lambda(g;\varphi,\psi)\,dg
= \lambda\int_{\Hn}\chi(g)\,k_\lambda(g;Q_j\varphi,\psi)\,dg
\]
for every $\chi\in C_c^\infty(\Hn)$. By \eqref{ibp-ext}, the left side equals $-i\int_{\Hn}\chi(g)\,X_j[k_\lambda(\cdot;\varphi,\psi)](g)\,dg$.
This gives the first identity in \eqref{distid}. The second identity follows in the same way from the second
identity in \eqref{diffprop}.
\end{proof}

\subsection{Smoothing the kernel}

Fix $\eta\in C_c^\infty(\Hn)$ satisfying
\[
\eta\ge0,\qquad \int_{\Hn}\eta=1,
\]
and set 
\[
\eta_\varepsilon(z,t):=\varepsilon^{-(2n+2)}\eta(z/\varepsilon,t/\varepsilon^2), \textrm{ \ for \ } \varepsilon>0.
\]
The exponent $2n+2$ is the homogeneous dimension of $\Hn$ under the dilations $(z,t)\mapsto(rz,r^2t)$, so $\int_{\Hn}\eta_\varepsilon=1$ for every $\varepsilon$, and $(\eta_\varepsilon)_{\varepsilon>0}$ is an approximate identity supported in shrinking neighbourhoods of $(0,0)$.

For $\varphi,\psi\in L^2(\mathbb{R}^n)$, define
\begin{equation}
\label{molldef}
k_\lambda^\varepsilon(g;\varphi,\psi):=\int_{\Hn}\eta_\varepsilon(h)\,k_\lambda(h^{-1}g;\varphi,\psi)\,dh,\qquad g\in\Hn.
\end{equation}

The following result illustrates that smoothing preserves the differentiation identities.

\begin{lemma}
\label{lem:smooth}
For each $\varepsilon>0$, $g\mapsto k_\lambda^\varepsilon(g;\varphi,\psi)$ is $C^\infty$ on $\Hn$, and, for $\varphi\in\mathcal{S}(\mathbb{R}^n)$,
\begin{equation}
\label{molleq}
X_j\big[k_\lambda^\varepsilon(\cdot;\varphi,\psi)\big]=i\lambda\,k_\lambda^\varepsilon(\cdot;Q_j\varphi,\psi),\qquad
Y_j\big[k_\lambda^\varepsilon(\cdot;\varphi,\psi)\big]=k_\lambda^\varepsilon(\cdot;P_j\varphi,\psi)
\end{equation}
classically, for every $g\in\Hn$.
\end{lemma}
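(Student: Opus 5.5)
Write $k(g):=k_\lambda(g;\varphi,\psi)$; this is a bounded measurable function with $\|k\|_\infty\le C\|\varphi\|_2\|\psi\|_2$. The smoothed kernel \eqref{molldef} is the group convolution $k_\lambda^\varepsilon(\cdot;\varphi,\psi)=\eta_\varepsilon * k$, that is, $(\eta_\varepsilon*k)(g)=\int\eta_\varepsilon(h)k(h^{-1}g)\,dh$. The plan has two steps: first obtain smoothness by moving the variable $g$ onto the smooth compactly supported factor $\eta_\varepsilon$; then obtain the identities \eqref{molleq} from the distributional identities of Lemma~\ref{lemdist}, using the fact that left-invariant vector fields commute with left convolution.

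\textbf{Smoothness.} Substitute $h\mapsto gh^{-1}$, which is legitimate because Haar measure is bi-invariant and inversion-invariant on $\Hn$. This gives
\[
k_\lambda^\varepsilon(g;\varphi,\psi)=\int_{\Hn}\eta_\varepsilon(gh^{-1})\,k(h)\,dh .
\]
The map $(g,h)\mapsto\eta_\varepsilon(gh^{-1})$ is $C^\infty$, since the group law is polynomial. For $g$ in a compact set $K$, this map is supported in $h\in (\operatorname{supp}\eta_\varepsilon)^{-1}K$, which is compact. Every Euclidean $g$-derivative of it is therefore bounded, uniformly on $K\times\Hn$, and supported in that compact $h$-set. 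Since $k\in L^\infty$, dominated convergence lets us differentiate under the integral to every order. Hence $k_\lambda^\varepsilon(\cdot;\varphi,\psi)\in C^\infty$.

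\textbf{Identities.} Fix $g$, and write $Z$ for either $X_j$ or $Y_j$. Because $Z$ is left-invariant, $Z_g[\eta_\varepsilon(gh^{-1})]$ can be written as a right-derivative in $h$ at the point $h$. Concretely, set $\theta_g(h):=\eta_\varepsilon(gh^{-1})$. Then
\[
\eta_\varepsilon\big(g\exp(rZ)h^{-1}\big)=\theta_g\big(h\exp(-rZ)\big),
\]
so $Z_g\,\eta_\varepsilon(gh^{-1})=-(Z\theta_g)(h)$. Here $\theta_g\in C_c^\infty(\Hn)$. It follows that
\[
Z\big[k_\lambda^\varepsilon(\cdot;\varphi,\psi)\big](g)=-\int_{\Hn}(Z\theta_g)(h)\,k(h)\,dh=\int_{\Hn}\theta_g(h)\,(Zk)(h)\,dh ,
\]
where the last step is the defining relation \eqref{ibp-ext} with test function $\theta_g$.

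Now insert Lemma~\ref{lemdist}: $Zk$ is the bounded function $i\lambda k_\lambda(\cdot;Q_j\varphi,\psi)$ when $Z=X_j$, and $k_\lambda(\cdot;P_j\varphi,\psi)$ when $Z=Y_j$. Undoing the substitution gives exactly $i\lambda k_\lambda^\varepsilon(g;Q_j\varphi,\psi)$ and $k_\lambda^\varepsilon(g;P_j\varphi,\psi)$ respectively, which is \eqref{molleq}. These are pointwise identities between continuous functions, so they hold classically at every $g$.

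\textbf{Main obstacle.} The delicate step is the sign and side bookkeeping, namely justifying $Z_g\,\eta_\varepsilon(gh^{-1})=-(Z\theta_g)(h)$ with $Z$ acting on the right in $h$, so that it is a left-invariant field matching \eqref{ibp-ext}. I would verify it directly from the one-parameter-subgroup description $Zf(g)=\frac{d}{dr}\big|_{r=0}f(g\cdot\exp rZ)$ and associativity, as was done for \eqref{leftcomm}. One should also note that $Q_j\varphi,P_j\varphi\in\mathcal{S}\subset L^2$, so the right-hand kernels are defined and bounded.
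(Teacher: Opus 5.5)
Your proof is correct. The smoothness step is the same as the paper's: substitute $u=h^{-1}g$ to put $g$ on $\eta_\varepsilon$, then use dominated convergence.

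For the identities you take a more direct route. The paper works weakly. It pairs $k_\lambda^\varepsilon(\cdot;\varphi,\psi)$ with $X_j\chi$ for an arbitrary $\chi\in C_c^\infty(\Hn)$, and uses \eqref{leftcomm} to write $(X_j\chi)(hg')=(X_j\chi_h)(g')$ with $\chi_h=L_{h^{-1}}\chi$. It then applies \eqref{ibp-ext} and Lemma~\ref{lemdist} inside a Fubini exchange, which gives \eqref{molleq} only as an identity of distributions. Finally it upgrades this to a pointwise identity, because both sides are continuous and the left side is the classical derivative of a smooth function.

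You instead differentiate under the integral at a fixed $g$, which your smoothness step already justifies. You convert $Z_g$ acting on $\eta_\varepsilon(gh^{-1})$ into $-Z$ acting on $\theta_g$ in the variable $h$; your computation $\theta_g(h\exp(-rZ))=\eta_\varepsilon(g\exp(rZ)h^{-1})$ gets the sign right. You then apply \eqref{ibp-ext} once, with the single test function $\theta_g\in C_c^\infty(\Hn)$. This gives the classical identity at each $g$ immediately, with no Fubini step and no distribution-to-classical upgrade, so your closing remark about continuity is not actually needed.

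The paper's version is the standard argument that left convolution commutes with left-invariant operators in the weak sense, and it reuses \eqref{leftcomm} verbatim. Yours replaces that with a short computation on the mollifier kernel and is slightly more economical.
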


\begin{proof}
Since Haar measure is bi-invariant, substituting $u=h^{-1}g,$ we get
\[
k_\lambda^\varepsilon(g;\varphi,\psi)=\int_{\Hn}\eta_\varepsilon(gu^{-1})\,k_\lambda(u;\varphi,\psi)\,du.
\]
Since $\eta_\varepsilon\in C_c^\infty(\Hn)$ and $k_\lambda(\cdot;\varphi,\psi)\in L^\infty(\Hn)$, the $g$-derivatives of $\eta_\varepsilon(gu^{-1})$ are smooth and compactly supported in $u$, locally uniformly in $g$. Differentiation under the integral sign is therefore justified by dominated convergence, giving $k_\lambda^\varepsilon(\cdot;\varphi,\psi)\in C^\infty(\Hn)$.

Fix $\chi\in C_c^\infty(\Hn)$ and write $\chi_h(g'):=\chi(hg')$. Applying \eqref{leftcomm} with $h$
replaced by $h^{-1}$, so that $\chi_h=L_{h^{-1}}\chi$, we get
\[
(X_j\chi)(hg') = \big(X_j(\chi_h)\big)(g').
\]
Since $\chi_h\in C_c^\infty(\Hn)$ and $k_\lambda(\cdot;\varphi,\psi)\in L^\infty(\Hn)$, equation \eqref{ibp-ext}
gives
\[
\int_{\Hn}k_\lambda(g';\varphi,\psi)\,(X_j\chi_h)(g')\,dg'
= -\int_{\Hn}\big(X_jk_\lambda(\cdot;\varphi,\psi)\big)(g')\,\chi_h(g')\,dg'.
\]
Substituting $g'=h^{-1}g$, we get
\begin{align*}
\int_{\Hn}k_\lambda^\varepsilon(g;\varphi,\psi)(X_j\chi)(g)\,dg
&=\int_{\Hn}\eta_\varepsilon(h)\left[\int_{\Hn}k_\lambda(g';\varphi,\psi)(X_j\chi_h)(g')\,dg'\right]dh\\
&=-\int_{\Hn}\eta_\varepsilon(h)\left[\int_{\Hn}\big(X_j k_\lambda(\cdot;\varphi,\psi)\big)(g')\,\chi_h(g')\,dg'\right]dh\\
&=-i\lambda\int_{\Hn}\eta_\varepsilon(h)\left[\int_{\Hn}k_\lambda(g';Q_j\varphi,\psi)\,\chi(hg')\,dg'\right]dh\\
&=-i\lambda\int_{\Hn}k_\lambda^\varepsilon(g;Q_j\varphi,\psi)\,\chi(g)\,dg,
\end{align*}
using \eqref{distid} in the third line and $g=hg'$ in the last. Thus, as distributions,
\[
X_j\big[k_\lambda^\varepsilon(\cdot;\varphi,\psi)\big]=i\lambda\,k_\lambda^\varepsilon(\cdot;Q_j\varphi,\psi).
\]
By \eqref{ibp}, the left-hand side equals the classical derivative of the $C^\infty$ function $k_\lambda^\varepsilon(\cdot;\varphi,\psi)$. By \eqref{molldef}, the right-hand side is continuous, hence smooth as above. Two continuous functions agreeing as distributions agree pointwise, so the identity holds in the classical sense, at every $g$. The computation corresponding to $Y_j$ follows identically, with no factor of $i\lambda$, using the second identity in \eqref{distid}.
\end{proof}

Since $k_\lambda(\cdot;\varphi,\psi)$ is sesquilinear in $(\varphi,\psi)$ and $\|k_\lambda\|_\infty<\infty$, so is $(\varphi,\psi)\mapsto k_\lambda^\varepsilon(g;\varphi,\psi)$, as a bounded form on $L^2(\mathbb{R}^n)\times L^2(\mathbb{R}^n)$ of norm $\le\|k_\lambda\|_\infty$.

By Lemma~\ref{lem:smooth}, for $\varphi\in\mathcal{S}(\mathbb{R}^n)$ and $\psi\in L^2(\mathbb{R}^n)$,
\begin{equation}
\label{Kepssys}
X_j(K_\varepsilon(g)\varphi,\psi)=i\lambda\,(K_\varepsilon(g)Q_j\varphi,\psi),\qquad Y_j(K_\varepsilon(g)\varphi,\psi)=(K_\varepsilon(g)P_j\varphi,\psi),\qquad g\in\Hn.
\end{equation}

\subsection{Solving the smooth system}

\begin{lemma}
\label{lemB}
Let $R:\Hn\to\mathcal{B}(L^2(\mathbb{R}^n))$ be a bounded map such that, for all $\varphi\in\mathcal{S}(\mathbb{R}^n)$ and $\psi\in L^2(\mathbb{R}^n)$, the function $g\mapsto(R(g)\varphi,\psi)$ is $C^\infty$ and, for every $j=1,\dots,n$,
\begin{equation}
\label{Rsys}
X_j(R\varphi,\psi)=i\lambda\,(RQ_j\varphi,\psi),\qquad Y_j(R\varphi,\psi)=(RP_j\varphi,\psi).
\end{equation}
Then $R(z,t)=R(0,0)\,\pi_\lambda(z,t)$.
\end{lemma}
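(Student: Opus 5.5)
The plan is to show that $S(g):=R(g)\pi_\lambda(g)^*$ is constant in $g$ by computing its derivatives along the one-parameter subgroups. Since $\pi_\lambda$ is a unitary representation, $R(g)=S(g)\pi_\lambda(g)$, so constancy gives $S(g)=S(0,0)=R(0,0)$ and hence the claim. Everything will be phrased weakly: fix $\varphi\in\mathcal{S}(\mathbb{R}^n)$ and $\psi\in L^2(\mathbb{R}^n)$, and study scalar functions of $g$.

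First I will rewrite \eqref{Rsys} as a statement about right translation along the one-parameter subgroups. For fixed $g$, set $u(r):=(R(g\cdot(re_j,0))\varphi,\psi)$. By the characterisation of $X_j$ via $g\cdot(re_j,0)$, applied at the point $g(re_j,0)$, we get
\[
u'(r)=i\lambda\,(R(g(re_j,0))Q_j\varphi,\psi).
\]
The candidate solution is $u(r)=(R(g)e^{ir\lambda Q_j}\varphi,\psi)$, which by \eqref{pivals} equals $(R(g)\pi_\lambda(re_j,0)\varphi,\psi)$. To prove that this candidate is the actual $u$, I will consider
\[
w(r):=(R(g(re_j,0))e^{-ir\lambda Q_j}\varphi,\psi).
\]
I will differentiate $w$ by the product rule: the $R$-part gives $i\lambda R(\cdot)Q_j e^{-ir\lambda Q_j}\varphi$, and the unitary part gives $-i\lambda R(\cdot)Q_je^{-ir\lambda Q_j}\varphi$. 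These cancel, so $w'\equiv0$. The product rule needs care, because $R$ is only bounded and the smoothness hypothesis is stated for fixed $\varphi$. I will form the difference quotient
\[
\frac{1}{h}\bigl[R(g_{r+h})\varphi_{r+h}-R(g_r)\varphi_r\bigr],
\]
where $\varphi_r:=e^{-ir\lambda Q_j}\varphi$, and split it as
\[
\frac{1}{h}\bigl[R(g_{r+h})-R(g_r)\bigr]\varphi_r+R(g_{r+h})\,\frac{\varphi_{r+h}-\varphi_r}{h}.
\]
The second term is handled using the uniform bound on $R$ and the fact that $(\varphi_{r+h}-\varphi_r)/h\to -i\lambda Q_j\varphi_r$ in $L^2$, which holds since $\varphi\in\mathcal{S}$. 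Its limit also requires $R(g_{r+h})\to R(g_r)$ weakly on the fixed vector $Q_j\varphi_r$, and this follows from continuity of $g\mapsto(R(g)\phi,\psi)$ for Schwartz $\phi$ together with $\varphi_r,Q_j\varphi_r\in\mathcal{S}$. Hence $w$ is constant, and $R(g\cdot(re_j,0))=R(g)\pi_\lambda(re_j,0)$ holds weakly on $\mathcal{S}$, and therefore everywhere by boundedness and density. The same argument with $Y_j$ and $e^{sP_j}$ gives $R(g\cdot(ire_j,0))=R(g)\pi_\lambda(ire_j,0)$.

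Next I will reach arbitrary points, including the central direction. Every $(z,t)$ can be written as a finite product of elements $(re_j,0)$ and $(ire_j,0)$. The products $(x,0)(iy,0)=(x+iy,\tfrac12\operatorname{Im}(x\cdot\overline{iy}))$ and commutators of the two subgroups produce arbitrary central elements $(0,t)$. Applying the two translation rules successively, starting from $g=(0,0)$, and using that $\pi_\lambda$ is a homomorphism, gives $R(h_1\cdots h_m)=R(0,0)\pi_\lambda(h_1)\cdots\pi_\lambda(h_m)=R(0,0)\pi_\lambda(h_1\cdots h_m)$. Concretely, the product $(iy_2,0)(x,0)(iy_1,0)$ with a suitable choice of $y_1,y_2$, or a group commutator, will give the needed central factor. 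The main obstacle is the rigorous product-rule step with an unbounded $Q_j$. This is why the $\mathcal{S}(\mathbb{R}^n)$-invariance of $e^{ir\lambda Q_j}$ and $e^{sP_j}$, together with the uniform bound on $R$, will be used carefully there.
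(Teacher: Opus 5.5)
Your argument is correct, but it takes a genuinely different route from the paper.

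\textbf{The paper's route.} The paper stays at the Lie-algebra level. It applies \eqref{Rsys} twice and uses $[X_j,Y_j]=-\partial_t$ together with $[Q_j,P_j]=-I$ to get $\partial_t(R\varphi,\psi)=i\lambda(R\varphi,\psi)$, hence $R(z,t)=e^{i\lambda t}\rho(z)$. It then rewrites the system in Euclidean coordinates for $\rho$ and computes $\partial_{x_j}$ and $\partial_{y_j}$ of $\eta(z)=\pi_\lambda(z)^{-1}\varphi$ explicitly. Finally it shows that $(\rho(z)\eta(z),\psi)$ has vanishing gradient on the connected space $\mathbb{C}^n$.

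\textbf{Your route.} You integrate the first-order system along each one-parameter subgroup. This gives the covariance $R(gk)=R(g)\pi_\lambda(k)$ for \emph{every} $g\in\Hn$ and every $k\in\mathcal{K}$. You then recover the central direction from group commutators, as in Lemma~\ref{lemG}. In effect you prove Lemma~\ref{lemB} by running the ``differentiation $\Rightarrow$ shift'' direction of Proposition~\ref{prop:equiv} on the kernel, followed by the generation mechanism of Section~\ref{sec:shift}. There are no null sets to track, because everything holds pointwise.

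\textbf{What each buys.} Your proof uses only first derivatives along the subgroups, whereas the paper's first step needs the second-order compositions $X_jY_j$ and $Y_jX_j$. It also avoids the explicit coordinate formulas for $\eta$, and it makes the Lie-algebra/Lie-group correspondence of Section~\ref{sec:furtherdisc} explicit. The paper's route, in turn, needs no factorisation of group elements; connectedness of $\mathbb{C}^n$ and a scalar ODE in $t$ suffice.

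\textbf{One point to tighten.} The factorisation step is sketchier than it should be. The product
\[
(iy_2,0)(x,0)(iy_1,0)=\bigl(x+i(y_1+y_2),\tfrac12\,x\cdot(y_2-y_1)\bigr)
\]
reaches an arbitrary central coordinate only when $x\neq0$. For $x=0$ you genuinely need the commutator $(ae_1,0)(ibe_1,0)(-ae_1,0)(-ibe_1,0)=(0,-ab)$. It is cleanest simply to quote the explicit $(2n+4)$-fold product from the proof of Lemma~\ref{lemG}.
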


\begin{proof}
First we recover the dependence of $R$ on the central variable $t$ from the commutator relation $[X_j,Y_j]=-\partial_t$. \\

Fix $\varphi \in\mathcal{S}(\mathbb{R}^n)$ and $\psi\in L^2(\mathbb{R}^n)$. Applying \eqref{Rsys} twice, with $P_j\varphi,Q_j\varphi\in\mathcal{S}(\mathbb{R}^n)$, we get
\[
X_jY_j(R\varphi,\psi)=i\lambda(RQ_jP_j\varphi,\psi),\qquad
Y_jX_j(R\varphi,\psi)=i\lambda(RP_jQ_j\varphi,\psi).
\]
By \eqref{CCR},
\[
[X_j,Y_j](R\varphi,\psi)=i\lambda(R[Q_j,P_j]\varphi,\psi)=-i\lambda(R\varphi,\psi).
\]
Since $[X_j,Y_j]=-\partial_t$ by \eqref{bracket}, this yields
\[
\partial_t(R\varphi,\psi)=i\lambda(R\varphi,\psi),
\]
hence
\[
(R(z,t)\varphi,\psi)=e^{i\lambda t}(R(z,0)\varphi,\psi).
\]
As $\mathcal{S}(\mathbb{R}^n)$ is dense and $R$ is bounded,
\[
R(z,t)=e^{i\lambda t}\rho(z) \ \text{with \ } \rho(z):=R(z,0).
\]
Substituting into \eqref{Rsys}, for $\varphi\in\mathcal{S}(\mathbb{R}^n)$,
\begin{equation}
\label{rhosys}
\partial_{x_j}(\rho\varphi,\psi)=i\lambda\Big(\rho\big(Q_j-\tfrac{y_j}{2}\big)\varphi,\psi\Big),\qquad
\partial_{y_j}(\rho\varphi,\psi)=\Big(\rho\big(P_j+\tfrac{i\lambda x_j}{2}\big)\varphi,\psi\Big).
\end{equation}

Now we obtain the corresponding derivatives of the known inverse $\pi_\lambda(z)^{-1}$. \\
Fix $\varphi\in\mathcal{S}(\mathbb{R}^n)$ and put $\eta(z):=\pi_\lambda(z)^{-1}\varphi=\pi_\lambda(-z,0)\varphi$. Explicitly,
\[
\eta(z)(\xi)=e^{i\lambda(-x\cdot\xi+\frac12x\cdot y)}\varphi(\xi-y).
\]
As in the proof of Lemma~\ref{lemA}, this is a smooth $\mathcal{S}(\mathbb{R}^n)$-valued function of $z$, differentiable in the norm of $L^2(\mathbb{R}^n)$. Differentiating this formula with respect to $x_j$ and$y_j$ give
\begin{equation}
\label{etasys}
\partial_{x_j}\eta=-i\lambda\Big(Q_j-\tfrac{y_j}{2}\Big)\eta,\qquad \partial_{y_j}\eta=-\Big(P_j+\tfrac{i\lambda x_j}{2}\Big)\eta.
\end{equation}
For obtaining the second identity we use that 
\[
\partial_{y_j}\big[e^{i\lambda(-x\cdot\xi+\frac12x\cdot y)}\big]=\tfrac{i\lambda x_j}{2}\eta(\xi),
\]
\[
\partial_{y_j}\big[\varphi(\xi-y)\big]=-e^{i\lambda(-x\cdot\xi+\frac12x\cdot y)}(\partial_j\varphi)(\xi-y),
\]
 and 
\[
 P_j\eta(\xi)=-i\lambda x_j\eta(\xi)+e^{i\lambda(-x\cdot\xi+\frac12x\cdot y)}(\partial_j\varphi)(\xi-y).
 \]

Finally we compare the two forms obtained above, to show that $\rho(z)\pi_\lambda(z)^{-1}$ is a constant, independent of $z$.

For $\psi\in L^2(\mathbb{R}^n)$ put $u(z):=(\rho(z)\eta(z),\psi)$. Fix $z_0\in\mathbb{C}^n$ and let $z=z_0+re_j$ with $r\in\mathbb{R}$, $r\to0$. Then
\[
u(z)-u(z_0) = \big[(\rho(z)\eta(z_0),\psi)-(\rho(z_0)\eta(z_0),\psi)\big] + \big(\rho(z)(\eta(z)-\eta(z_0)),\psi\big).
\]
Since $\eta(z_0)\in\mathcal{S}(\mathbb{R}^n)$, the function $z\mapsto(\rho(z)\eta(z_0),\psi)$ is smooth, and by \eqref{rhosys} its derivative with respect to $x_j$ at $z_0$ is
\[
i\lambda\big(\rho(z_0)(Q_j-\tfrac{y_j}{2})\eta(z_0),\psi\big).
\]
For the second term, $\tfrac1r(\eta(z)-\eta(z_0))\to\partial_{x_j}\eta(z_0)$ in $L^2(\mathbb{R}^n)$, because $\eta$ is norm-differentiable. Since $\rho$ is bounded, it follows that
\[
\Big(\rho(z)\tfrac1r\big(\eta(z)-\eta(z_0)\big),\psi\Big)-\big(\rho(z)\partial_{x_j}\eta(z_0),\psi\big)\to0.
\]
Since $\partial_{x_j}\eta(z_0)\in\mathcal{S}(\mathbb{R}^n)$ and $z\mapsto(\rho(z)\phi,\psi)$ is continuous for every $\phi\in\mathcal{S}(\mathbb{R}^n),$ we get
\[
\big(\rho(z)\partial_{x_j}\eta(z_0),\psi\big)\to\big(\rho(z_0)\partial_{x_j}\eta(z_0),\psi\big) \quad \text{as } z\to z_0.
\]
Thus
\[
\partial_{x_j}u(z_0)=i\lambda\Big(\rho(z_0)\big(Q_j-\tfrac{y_j}{2}\big)\eta(z_0),\psi\Big)+\big(\rho(z_0)\partial_{x_j}\eta(z_0),\psi\big).
\]
The same argument with $z=z_0+ire_j$ gives the corresponding formula for $\partial_{y_j}u(z_0)$. 
Combining \eqref{rhosys} and \eqref{etasys} we obtain
\[
\partial_{x_j}u=i\lambda\Big(\rho\big(Q_j-\tfrac{y_j}{2}\big)\eta,\psi\Big)+\Big(\rho\big(-i\lambda(Q_j-\tfrac{y_j}{2})\eta\big),\psi\Big)=0.
\]
Similarly, we get $\partial_{y_j}u=0$. As $\mathbb{C}^n$ is connected,
\[
u(z)=u(0)=(\rho(0)\varphi,\psi),
\]
that is,
\[
(\rho(z)\pi_\lambda(z)^{-1}\varphi,\psi)=(R(0,0)\varphi,\psi) \quad \text{for all } \varphi\in\mathcal{S}(\mathbb{R}^n) \text{ and } \  \psi\in L^2(\mathbb{R}^n),
\]
with $\rho(0)=R(0,0)$.\\
Since $\mathcal{S}(\mathbb{R}^n)$ is dense in $L^2(\mathbb{R}^n)$ and both sides are bounded,
\[
\rho(z)\pi_\lambda(z)^{-1}=R(0,0), \quad \text{i.e.}\quad \rho(z)=R(0,0)\,\pi_\lambda(z).
\]
Thus we get $R(z,t)=e^{i\lambda t}\rho(z)=R(0,0)\,\pi_\lambda(z,t)$, as claimed.
\end{proof}

\begin{corollary}
\label{cor:Keps}
For every $\varepsilon>0$, $K_\varepsilon(z,t)=K_\varepsilon(0,0) \,\pi_\lambda(z,t)$ for every $(z,t)\in\Hn$.
\end{corollary}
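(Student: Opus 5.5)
The corollary follows by applying Lemma~\ref{lemB} with $R:=K_\varepsilon$. Here $K_\varepsilon(g)$ is the bounded operator on $L^2(\mathbb{R}^n)$ associated with the bounded sesquilinear form $(\varphi,\psi)\mapsto k_\lambda^\varepsilon(g;\varphi,\psi)$. The plan is to check the three hypotheses of Lemma~\ref{lemB} one at a time, and then read off the conclusion.

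First, we make the definition of $K_\varepsilon$ precise and establish boundedness. For each $g\in\Hn$, the remark following Lemma~\ref{lem:smooth} shows that $(\varphi,\psi)\mapsto k_\lambda^\varepsilon(g;\varphi,\psi)$ is sesquilinear with norm at most $\|k_\lambda\|_\infty$. Hence the Riesz representation theorem gives a unique $K_\varepsilon(g)\in\mathcal{B}(L^2(\mathbb{R}^n))$ with
\[
(K_\varepsilon(g)\varphi,\psi)=k_\lambda^\varepsilon(g;\varphi,\psi),\qquad \|K_\varepsilon(g)\|_{\op}\le\|k_\lambda\|_\infty\le C .
\]
This bound is uniform in $g$, so $K_\varepsilon:\Hn\to\mathcal{B}(L^2(\mathbb{R}^n))$ is a bounded map, which is the first hypothesis.

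Second, we check smoothness and the system \eqref{Rsys}. For $\varphi\in\mathcal{S}(\mathbb{R}^n)$ and $\psi\in L^2(\mathbb{R}^n)$, the function $g\mapsto(K_\varepsilon(g)\varphi,\psi)=k_\lambda^\varepsilon(g;\varphi,\psi)$ is $C^\infty$ by Lemma~\ref{lem:smooth}. That lemma actually gives smoothness for all $\varphi,\psi\in L^2(\mathbb{R}^n)$, since the differentiation falls on $\eta_\varepsilon(gu^{-1})$ and only boundedness of $k_\lambda$ is used. Moreover, \eqref{molleq}, restated as \eqref{Kepssys}, is exactly \eqref{Rsys} with $R=K_\varepsilon$. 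Note that $Q_j\varphi$ and $P_j\varphi$ again lie in $\mathcal{S}(\mathbb{R}^n)$, so the right-hand sides make sense and are themselves smooth. This is also what Lemma~\ref{lemB} needs when it iterates the identities.

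Third, Lemma~\ref{lemB} now yields $K_\varepsilon(z,t)=K_\varepsilon(0,0)\,\pi_\lambda(z,t)$ for every $(z,t)\in\Hn$, which is the claim.

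The only point needing care is the passage from the family of scalar functions $k_\lambda^\varepsilon(\cdot;\varphi,\psi)$ to a single operator-valued map. Here one must check that the boundedness constant does not depend on $g$ and that no exceptional null sets arise. Neither problem occurs: $k_\lambda^\varepsilon$ is defined pointwise for every $g$ through the integral \eqref{molldef}, and the integrand is bounded by $\|k_\lambda\|_\infty\,\eta_\varepsilon(h)$. With this in place, the corollary is a direct specialisation of Lemma~\ref{lemB}.
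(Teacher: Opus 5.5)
Your proposal is correct and follows the paper's proof: both apply Lemma~\ref{lemB} with $R:=K_\varepsilon$, taking smoothness and the system \eqref{Rsys} from Lemma~\ref{lem:smooth} and \eqref{Kepssys}. Your explicit construction of $K_\varepsilon(g)$ via the Riesz representation theorem, with the uniform bound $\|K_\varepsilon(g)\|_{\op}\le\|k_\lambda\|_\infty$ valid at every $g$, usefully spells out the definition and boundedness of $K_\varepsilon$ that the paper leaves implicit.
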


\begin{proof}
By Lemma~\ref{lem:smooth} and \eqref{Kepssys}, the map $R:=K_\varepsilon$ is bounded, weakly smooth, and satisfies \eqref{Rsys}. Lemma~\ref{lemB} therefore gives $K_\varepsilon(z,t)=K_\varepsilon(0,0)\,\pi_\lambda(z,t)=D_\varepsilon\,\pi_\lambda(z,t)$.
\end{proof}

\subsection{Removing the smoothing}

\begin{lemma}[Passage to the limit]
\label{lem:limit}
There is $D(\lambda)\in\mathcal{B}(L^2(\mathbb{R}^n))$, with $\|D(\lambda)\|_{\op}\le\|k_\lambda\|_\infty$, such that
\[
k_\lambda(z,t)=D(\lambda)\,\pi_\lambda(z,t)\qquad\text{for a.e.\ }(z,t)\in\Hn.
\]
\end{lemma}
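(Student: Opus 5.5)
We pass to the limit $\varepsilon\to0$ in Corollary~\ref{cor:Keps}, where $K_\varepsilon(g)$ is the operator of norm $\le\|k_\lambda\|_\infty$ representing the bounded form $k^\varepsilon_\lambda(g;\cdot,\cdot)$ and $D_\varepsilon:=K_\varepsilon(0,0)$. The argument has three steps: extract a weak-operator limit of $D_\varepsilon$, identify the limit of $k^\varepsilon_\lambda$ with $k_\lambda$ by testing against compactly supported functions, and remove the resulting $(\varphi,\psi)$-dependent null sets using countably many pairs.

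\textbf{Step 1: a limit operator.} Since $\|D_\varepsilon\|_{\op}\le\|k_\lambda\|_\infty$ for all $\varepsilon$, and bounded sets of $\mathcal{B}(L^2(\mathbb{R}^n))$ are weak-operator sequentially compact ($L^2$ is separable), we can choose $\varepsilon_m\to0$ and $D(\lambda)$ with $D_{\varepsilon_m}\to D(\lambda)$ in the weak operator topology. Weak lower semicontinuity of the norm gives $\|D(\lambda)\|_{\op}\le\|k_\lambda\|_\infty$.

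\textbf{Step 2: identify the limit of the smoothed kernel.} Fix $\varphi,\psi\in L^2(\mathbb{R}^n)$ and $\chi\in C_c(\Hn)$.
\begin{itemize}
\item By Fubini and the substitution in \eqref{molldef}, $\int\chi(g)\,k^\varepsilon_\lambda(g;\varphi,\psi)\,dg=\int k_\lambda(u;\varphi,\psi)\,(\tilde\eta_\varepsilon*\chi)(u)\,du$, where $\tilde\eta_\varepsilon*\chi(u)=\int\eta_\varepsilon(h)\chi(hu)\,dh$.
\item As $\varepsilon\to0$, $\tilde\eta_\varepsilon*\chi\to\chi$ in $L^1(\Hn)$. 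This follows from uniform continuity of $\chi$, since the supports of $\eta_\varepsilon$ shrink to the identity and all the functions involved are supported in a fixed compact set.
\item Because $k_\lambda(\cdot;\varphi,\psi)\in L^\infty$, the left side therefore tends to $\int\chi\,k_\lambda(\cdot;\varphi,\psi)$.
\item By Corollary~\ref{cor:Keps}, the same left side along $\varepsilon_m$ equals $\int\chi(g)\,(D_{\varepsilon_m}\pi_\lambda(g)\varphi,\psi)\,dg$. The integrand converges pointwise to $\chi(g)(D(\lambda)\pi_\lambda(g)\varphi,\psi)$ by weak-operator convergence and is dominated by $\|k_\lambda\|_\infty|\chi|\,\|\varphi\|\|\psi\|$. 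Dominated convergence then gives the limit $\int\chi(g)(D(\lambda)\pi_\lambda(g)\varphi,\psi)\,dg$.
\end{itemize}
Comparing the two limits, the bounded functions $k_\lambda(\cdot;\varphi,\psi)$ and $(D(\lambda)\pi_\lambda(\cdot)\varphi,\psi)$ agree as distributions, hence agree a.e.

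\textbf{Step 3: a single null set.} The exceptional null set in Step 2 depends on $(\varphi,\psi)$, and this is the main obstacle. Following the pattern of Lemma~\ref{lemE}, we take $(\varphi,\psi)$ in the countable set $\mathcal{E}\times\mathcal{E}$ and let $N'$ be the union of the corresponding null sets. For $g\notin N'$, the two bounded operators $k_\lambda(g)$ and $D(\lambda)\pi_\lambda(g)$ (both of norm $\le\|k_\lambda\|_\infty$) have equal matrix coefficients on the dense set $\mathcal{E}\times\mathcal{E}$, so they coincide. Hence $k_\lambda(z,t)=D(\lambda)\pi_\lambda(z,t)$ for a.e. $(z,t)$, independently of the chosen subsequence.
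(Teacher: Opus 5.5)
Your proof is correct, but it follows a different route from the paper's.

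\textbf{The paper's route.} The paper works pointwise. It uses $\eta_\varepsilon*u\to u$ in $L^1_{\mathrm{loc}}$ for $u=k_\lambda(\cdot;\Phi_\alpha,\Phi_\beta)$. A diagonal argument over the countably many Hermite pairs then gives a sequence $\varepsilon_m\to0$ with $K_{\varepsilon_m}(g)\to k_\lambda(g)$ weakly for every $g$ in a conull set $E$. The limit of $D_{\varepsilon_m}$ then comes for free by evaluating at a single good point: $D(\lambda)=k_\lambda(g_0)\pi_\lambda(g_0)^{-1}$. So the paper needs no compactness argument, and $D(\lambda)$ has an explicit formula.

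\textbf{Your route.} You produce $D(\lambda)$ abstractly, from weak-operator sequential compactness of bounded sets in $\mathcal{B}(L^2(\mathbb{R}^n))$. You then identify the limit only weakly, by moving the mollifier onto the test function $\chi$. As a result you need $L^1$-continuity of translation only for $C_c$ functions, not a.e.\ convergence (along a subsequence) of mollifications of an $L^\infty$ function. You also take the countable union of null sets just once, at the end, rather than at the convergence stage.

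\textbf{Common core.} Both arguments rest on the same two facts: the uniform bound $\|K_\varepsilon(g)\|_{\op}\le\|k_\lambda\|_\infty$, and the exact identity $K_\varepsilon(g)=D_\varepsilon\pi_\lambda(g)$ from Corollary~\ref{cor:Keps}. Your closing remark adds something small: $D(\lambda)$ is independent of the chosen subsequence, because $\pi_\lambda(g)$ is invertible. Hence the whole family $D_\varepsilon$ converges weakly to $D(\lambda)$ as $\varepsilon\to0$, not just a subsequence.
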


\begin{proof}
For each fixed pair $\varphi,\psi\in L^2(\mathbb{R}^n)$, the function $u:=k_\lambda(\cdot;\varphi,\psi)$ lies in $L^\infty(\Hn)\subset L^1_{\mathrm{loc}}(\Hn)$, and $k_\lambda^\varepsilon(\cdot;\varphi,\psi)=\eta_\varepsilon*u$. The functions $(\eta_\varepsilon)$ are supported in shrinking neighbourhoods of the identity and have integral one. Since translation is continuous in $L^1_{\mathrm{loc}}$, we get
\[
(\eta_\varepsilon*u)(g)-u(g)=\int_{\Hn}\eta_\varepsilon(h)\big[u(h^{-1}g)-u(g)\big]\,dh \longrightarrow 0 \quad \text{in } L^1_{\mathrm{loc}}(\Hn) \text{ as } \varepsilon\to0,
\]
i.e.\ $\eta_\varepsilon*u\to u$ in $L^1_{\mathrm{loc}}(\Hn)$.

Applying this to the countably many pairs $(\Phi_\alpha,\Phi_\beta)$ and using a diagonal argument, there exist a sequence $\varepsilon_m\to0$ and a set $E\subset\Hn$ whose complement is a null set such that
\[
k_\lambda^{\varepsilon_m}(g;\Phi_\alpha,\Phi_\beta)\to k_\lambda(g;\Phi_\alpha,\Phi_\beta) \qquad \text{for every } \alpha,\beta \text{ and every } g\in E.
\]
Since $\|K_{\varepsilon}(g)\|_{\op}\le\|k_\lambda\|_\infty$ and the set of Hermite functions is total,
\[
K_{\varepsilon_m}(g)\to k_\lambda(g) \quad \text{weakly, for every } g\in E.
\]

For $g_0\in E,$ we know that $\pi_\lambda(g_0)^{-1}=\pi_\lambda(g_0)^*=\pi_\lambda(g_0^{-1})$ is a bounded operator, since $\pi_\lambda$ is a unitary representation. By Corollary~\ref{cor:Keps},
\[
K_{\varepsilon_m}(0,0)=K_{\varepsilon_m}(g_0)\pi_\lambda(g_0)^{-1}\to k_\lambda(g_0)\pi_\lambda(g_0)^{-1}=:D(\lambda) \quad \text{weakly},
\]
since right multiplication by a fixed bounded operator preserves weak convergence. Moreover,
\[
\|D(\lambda)\|_{\op}\le\|k_\lambda\|_\infty, \qquad \text{since } \|K_{\varepsilon}(0,0)\|_{\op}=\|K_{\varepsilon}(g_0)\|_{\op}\le\|k_\lambda\|_\infty.
\]
Another application of Corollary~\ref{cor:Keps} gives
\[
K_{\varepsilon_m}(g)=D_{\varepsilon_m}\pi_\lambda(g)\to D(\lambda)\pi_\lambda(g) \quad \text{weakly, for every } g\in\Hn.
\]
Comparing the two limits of $K_{\varepsilon_m}(g)$ at each $g\in E$ gives $k_\lambda(g)=D(\lambda)\pi_\lambda(g)$.
\end{proof}

The following result gives a precise description of the map $D(\lambda)$.

\begin{lemma}
\label{lemCD}
Let $D(\lambda)$ be as in \Cref{lem:limit}. Then 
\begin{enumerate}
\item[\textup{(i)}] $B(f)(\lambda) = D(\lambda)\,\pi_\lambda(f)$ for every $f\in L^1(\Hn)$.
\item[\textup{(ii)}] If, in addition, $B$ satisfies the Heisenberg Dirac delta property, then $D(\lambda)=I$ for a.e.\ $\lambda$.
\end{enumerate}
\end{lemma}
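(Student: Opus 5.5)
For (i), I will start from the kernel representation of Lemma~\ref{lemE} and substitute the identification $k_\lambda(z,t)=D(\lambda)\pi_\lambda(z,t)$ for a.e.\ $(z,t)$ from Lemma~\ref{lem:limit}. For $f\in L^1(\Hn)$ and $\varphi,\psi\in L^2(\mathbb{R}^n)$ this gives
\[
(B(f)(\lambda)\varphi,\psi)=\int_{\Hn}f(z,t)\,(D(\lambda)\pi_\lambda(z,t)\varphi,\psi)\,dz\,dt=\int_{\Hn}f(z,t)\,(\pi_\lambda(z,t)\varphi,D(\lambda)^*\psi)\,dz\,dt .
\]
Changing the integrand on a null set does not affect the integral. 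The last expression is $(\pi_\lambda(f)\varphi,D(\lambda)^*\psi)=(D(\lambda)\pi_\lambda(f)\varphi,\psi)$, by the weak definition of the Bochner-type integral $\pi_\lambda(f)$. Since this holds for all $\varphi,\psi$, we get $B(f)(\lambda)=D(\lambda)\pi_\lambda(f)$ for every $\lambda\in\Lambda$ and every $f$. This part is routine.

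For (ii), fix $\lambda\in\Lambda$ such that the Dirac delta property holds at $\lambda$; this excludes only a further null set. Let $(\delta_U)$ be the Heisenberg Dirac delta family from Definition~\ref{def:ddp}. By (i), $B(\delta_U)(\lambda)=D(\lambda)\pi_\lambda(\delta_U)$. By Proposition~\ref{prop:pi-ddp}, $\pi_\lambda(\delta_U)\varphi\to\varphi$ in $L^2$ for each $\varphi$. Since $D(\lambda)$ is bounded, $D(\lambda)\pi_\lambda(\delta_U)\varphi\to D(\lambda)\varphi$. The hypothesis says the same net converges strongly to $\varphi$. Uniqueness of limits then gives $D(\lambda)\varphi=\varphi$ for all $\varphi$, so $D(\lambda)=I$.

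The only delicate point is bookkeeping of null sets. The Dirac delta property is a hypothesis on the normalised $B$ of Remark~\ref{rem:reps}, so it holds at a.e.\ $\lambda$, and we intersect that set with $\Lambda$. The strong limit in the hypothesis is over the net $U\to(0,0)$, while Proposition~\ref{prop:pi-ddp} gives convergence along the same net. Hence no subsequence issues arise, and the conclusion $D(\lambda)=I$ holds for a.e.\ $\lambda$.
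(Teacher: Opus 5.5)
Your proof is correct and follows essentially the same route as the paper. In (i) you substitute $k_\lambda=D(\lambda)\pi_\lambda$ a.e.\ into the kernel representation of Lemma~\ref{lemE}, and in (ii) you use $B(\delta_U)(\lambda)=D(\lambda)\pi_\lambda(\delta_U)\to D(\lambda)$ strongly together with uniqueness of strong limits along the same net; your extra bookkeeping (moving $D(\lambda)$ to $D(\lambda)^*\psi$ in the weak integral, and intersecting the Dirac-delta conull set with $\Lambda$) only makes explicit what the paper leaves implicit.
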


\begin{proof}
(i) By Lemma~\ref{lem:limit} and Lemma~\ref{lemE},
\[
B(f)(\lambda)=\int f(g)k_\lambda(g)\,dg=D(\lambda)\int f(g)\pi_\lambda(g)\,dg=D(\lambda)\pi_\lambda(f).
\]

(ii) Let $(\delta_U)$ be the Dirac delta family in Definition~\ref{def:ddp}. By (i), $B(\delta_U)(\lambda)=D(\lambda)\pi_\lambda(\delta_U)$. Since $\pi_\lambda$ is strongly continuous and $\pi_\lambda(0,0)=I$, $\pi_\lambda(\delta_U)\to I$ strongly as $U\to(0,0)$, so $B(\delta_U)(\lambda)\to D(\lambda)$. The Dirac delta property forces this limit to be $I$ for a.e.\ $\lambda$.
\end{proof}

Now we are ready to assemble the proof of the characterisation theorem in terms of differentiation property.

\begin{proof}[Proof of Theorem~\ref{thm:main-diff}]
Combine parts (i) and (ii) of Lemma~\ref{lemCD}. This gives $B(f)(\lambda)=\pi_\lambda(f)$ for every $f\in L^1(\Hn)$ and a.e.\ $\lambda$.
\end{proof}

\begin{remark}
If $k_\lambda$ happens already to be weakly smooth, with the differentiation identities holding classically, Lemma~\ref{lemB} applies directly to $R:=k_\lambda$. In this case, there is no need for smoothing or limiting step, and we get $k_\lambda(z,t)=k_\lambda(0,0)\,\pi_\lambda(z,t)$. This occurs, for instance, if $B$ is given from the outset by such a kernel, as in Theorem~\ref{thm:kruk}. This is the special case in which the argument reduces to Lemma~\ref{lemB} alone.
\end{remark}

\section{The shift-property characterisation}
\label{sec:shift}
In this section, we obtain the shift-based characterisation of the group Fourier transform on the Heisenberg group.\\

First we setup the notation and terminology needed for stating the main result.

\begin{definition}
\label{def:hsp}
Let $B$ be as in Definition~\ref{def:clt}. We say $B$ satisfies the \emph{Heisenberg shift property} if, for every $j=1,\dots,n$, $s\in\mathbb{R}$, and $f\in L^1(\Hn)$,
\begin{equation}
\label{shiftprop}
B(R_{(se_j,0)}f)(\lambda)=B(f)(\lambda)\,e^{-is\lambda Q_j},\qquad
B(R_{(ise_j,0)}f)(\lambda)=B(f)(\lambda)\,e^{-sP_j}
\end{equation}
for a.e.\ $\lambda\in\mathbb{R}^*$.
\end{definition}

By \eqref{pivals}, the shift property \eqref{shiftprop} is precisely Proposition~\ref{prop:lt-prop3}'s covariance identity $\pi_\lambda(R_hf)=\pi_\lambda(f)\pi_\lambda(h)^*$, applied at $h=(se_j,0)$ and $h=(ise_j,0)$. In particular, the group Fourier transform itself satisfies the Heisenberg shift property.

The following is the shift-based characterisation for the group Fourier transform.

\begin{theorem}
\label{thm:main-shift}
Let $B:L^1(\Hn)\to L^\infty(\mathbb{R}^*,\mathcal{B}(L^2(\mathbb{R}^n)),d\mu)$ be a continuous linear map satisfying the Heisenberg shift property and the Heisenberg Dirac delta property. Then
\[
B(f)(\lambda)=\pi_\lambda(f)\qquad\text{for every }f\in L^1(\Hn)\text{ and a.e.\ }\lambda\in\mathbb{R}^*.
\]
\end{theorem}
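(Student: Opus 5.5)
The plan is to reduce the shift property to a functional equation for the kernel $k_\lambda$ of Lemma~\ref{lemE}, and then to solve it the way Lemma~\ref{lemB} is solved, except that the equation is now integrated rather than infinitesimal. Fix $\lambda\in\Lambda$ as in Remark~\ref{rem:fixed-lambda}, so that \eqref{shiftprop} holds there for all $f$ and all $s$. Write $h_s=(se_j,0)$ or $(ise_j,0)$.

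\textbf{Step 1: functional equation for the kernel.} By right invariance of Haar measure,
\[
(B(R_{h_s}f)(\lambda)\varphi,\psi)=\int f(gh_s)\,k_\lambda(g;\varphi,\psi)\,dg=\int f(g)\,k_\lambda(gh_s^{-1};\varphi,\psi)\,dg .
\]
The right side of \eqref{shiftprop} gives $\int f(g)\,(k_\lambda(g)\pi_\lambda(h_s)^*\varphi,\psi)\,dg$, using \eqref{pivals}. Since $f\in L^1$ is arbitrary, for each fixed $s,\varphi,\psi$ we get
\[
k_\lambda(gh_s^{-1})=k_\lambda(g)\,\pi_\lambda(h_s)^{-1}
\]
weakly, for a.e.\ $g$. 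The null set depends on $s$, $\varphi$ and $\psi$.

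\textbf{Step 2: remove the null sets by smoothing.} I would pass to $K_\varepsilon$ from \eqref{molldef}. Left convolution with $\eta_\varepsilon$ commutes with right translation, so $K_\varepsilon(gh_s^{-1})=K_\varepsilon(g)\pi_\lambda(h_s)^{-1}$ holds for every $g$, every $s$, and every $\varphi,\psi$. This uses the continuity of $K_\varepsilon$ from Lemma~\ref{lem:smooth}, which relied only on boundedness of $k_\lambda$.

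\textbf{Step 3: solve the smoothed equation.} Replacing $s$ by $-s$ gives $K_\varepsilon(gh_s)=K_\varepsilon(g)\pi_\lambda(h_s)$ for the $2n$ one-parameter subgroups. Every $(z,t)\in\Hn$ is a finite product of such elements. For the central direction, the commutator $h_sh'_rh_{-s}h'_{-r}=(0,sr)$, which follows from the group law with appropriate signs. Iterating the identity along such a word $w=h^{(1)}\cdots h^{(m)}$ gives
\[
K_\varepsilon(w)=K_\varepsilon(0,0)\,\pi_\lambda(h^{(1)})\cdots\pi_\lambda(h^{(m)})=K_\varepsilon(0,0)\,\pi_\lambda(w),
\]
because $\pi_\lambda$ is a homomorphism. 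This is exactly Corollary~\ref{cor:Keps}, obtained without any differentiation.

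\textbf{Step 4: finish as in the differentiation case.} Lemma~\ref{lem:limit} and Lemma~\ref{lemCD} used only the conclusion of Corollary~\ref{cor:Keps}. Applying them verbatim gives $k_\lambda=D(\lambda)\pi_\lambda$ a.e. It follows that $B(f)(\lambda)=D(\lambda)\pi_\lambda(f)$, and the Dirac delta property then forces $D(\lambda)=I$ for a.e.\ $\lambda$.

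\textbf{Main obstacle.} The delicate point is Step 2: making sure the identity holds for all $s$ simultaneously at a single fixed $\lambda$. This is guaranteed by Remark~\ref{rem:fixed-lambda}, and after smoothing it becomes an identity between continuous functions of $g$. A second point is the verification that the words in $h_s$ exhaust $\Hn$, including the centre. An alternative route for that would be to differentiate in $s$ to recover \eqref{Kepssys} and then invoke Lemma~\ref{lemB} directly.
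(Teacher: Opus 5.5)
Your proposal is correct, but it removes the null-set dependence by a different mechanism than the paper.

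\textbf{The paper's route.} It stays with the unsmoothed kernel. Lemma~\ref{lemF} is your Step~1. Lemma~\ref{lemG} then runs the closure-and-generation argument on the a.e.\ relation itself, tracking exceptional sets such as $N_2k_1^{-1}$. This only yields, for each $(z,t)$, the identity $k_\lambda(g(z,t))=k_\lambda(g)\pi_\lambda(z,t)$ off a null set of $g$ that depends on $(z,t)$. Lemma~\ref{lemH} then proves joint measurability in $(g,(z,t))$ of the difference of the matrix coefficients, using a Borel modification and a Hermite expansion. It applies Tonelli to find a single good $g_0$, which gives $k_\lambda=D(\lambda)\pi_\lambda$ a.e.\ directly.

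\textbf{Your route.} You exploit the fact that left convolution with $\eta_\varepsilon$ commutes with right translation. For each fixed $s,\varphi,\psi$, the a.e.\ identity in the variable $u=h^{-1}g$ is integrated against $\eta_\varepsilon(h)\,dh$, so it holds at every $g$. You do not actually need the continuity of $K_\varepsilon$ for this. After that, the generation step is pure algebra, and the conclusion is exactly Corollary~\ref{cor:Keps}. You are right that Lemma~\ref{lem:limit} and Lemma~\ref{lemCD} use nothing beyond that corollary, the boundedness of $k_\lambda$, and the Dirac delta property.

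\textbf{Comparison.} Your approach is more economical and unifying. It needs no two-variable measurability and no Fubini argument. Both theorems funnel through the same smoothed identity, so the only difference between them is how Corollary~\ref{cor:Keps} is obtained; your suggested alternative of differentiating in $s$ and invoking Lemma~\ref{lemB} makes this explicit. The paper's approach keeps Section~\ref{sec:shift} self-contained. It avoids the approximate identity, the $L^1_{\mathrm{loc}}$ convergence and the diagonal subsequence, and works with $k_\lambda$ throughout. This is consistent with the author's stated choice to keep the two proofs distinct.
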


Throughout this section, $\lambda\in\Lambda$ is fixed and $k_\lambda$ denotes the kernel of $B$ furnished by Lemma~\ref{lemE}.

We obtain the following covariance formula for the kernel.
\begin{lemma}
\label{lemF}
For every $h\in\Hn$,
\begin{equation}
\label{genformula}
B(R_hf)(\lambda)=\int_{\Hn}f(g)\,k_\lambda(gh^{-1})\,dg,\qquad g:=(z,t).
\end{equation}
Consequently, the Heisenberg shift property at $\lambda$ (that is, \eqref{shiftprop} at $\lambda$ for all $f$ and $s$) is equivalent to the a.e.\ identity
\begin{equation}
\label{covreln}
k_\lambda(gk)=k_\lambda(g)\,\pi_\lambda(k)\qquad\text{for a.e.\ }g\in\Hn,
\end{equation}
holding for every $k$ in the family $\mathcal{K}:=\{(se_j,0),(ise_j,0):s\in\mathbb{R},\,j=1,\dots,n\}$.
\end{lemma}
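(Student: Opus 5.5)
The plan has two steps: derive \eqref{genformula} by a change of variables in the kernel representation, then read off the equivalence from uniqueness in the $L^1$--$L^\infty$ duality.

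\emph{Step 1: the covariance formula.} By Lemma~\ref{lemE} applied to $R_hf\in L^1(\Hn)$, for $\varphi,\psi\in L^2(\mathbb{R}^n)$ we have
\[
(B(R_hf)(\lambda)\varphi,\psi)=\int_{\Hn} f(gh)\,(k_\lambda(g)\varphi,\psi)\,dg .
\]
Substitute $g'=gh$, so $g=g'h^{-1}$. Right translation preserves $dz\,dt$ because $\Hn$ is unimodular, so the integral becomes
\[
\int_{\Hn} f(g')\,(k_\lambda(g'h^{-1})\varphi,\psi)\,dg'.
\]
This is \eqref{genformula}, understood weakly. The integrand is measurable and bounded, since $g'\mapsto g'h^{-1}$ is a measure-preserving diffeomorphism.

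\emph{Step 2: sufficiency of \eqref{covreln}.} Assume \eqref{covreln} holds for every $k\in\mathcal{K}$. Given $h\in\mathcal{K}$, note that $h^{-1}\in\mathcal{K}$ as well, since $(se_j,0)^{-1}=(-se_j,0)$. Applying \eqref{covreln} with $k=h^{-1}$ gives
\[
k_\lambda(gh^{-1})=k_\lambda(g)\pi_\lambda(h^{-1})=k_\lambda(g)\pi_\lambda(h)^*
\]
for a.e.\ $g$. Inserting this into \eqref{genformula} yields
\[
(B(R_hf)(\lambda)\varphi,\psi)=(B(f)(\lambda)\pi_\lambda(h)^*\varphi,\psi),
\]
since $\pi_\lambda(h)^*\varphi\in L^2$ and Lemma~\ref{lemE} applies to it. By \eqref{pivals}, $\pi_\lambda(se_j,0)^*=e^{-is\lambda Q_j}$ and $\pi_\lambda(ise_j,0)^*=e^{-sP_j}$. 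This is exactly \eqref{shiftprop} at $\lambda$.

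\emph{Step 3: necessity of \eqref{covreln}.} Assume \eqref{shiftprop} holds at $\lambda$ for all $f$ and $s$. Then for each $h\in\mathcal{K}$, each $f$, and each $\varphi,\psi$,
\[
\int f(g)\,\big[(k_\lambda(gh^{-1})\varphi,\psi)-(k_\lambda(g)\pi_\lambda(h)^*\varphi,\psi)\big]\,dg=0 .
\]
The bracket is an $L^\infty$ function, and it integrates to zero against every $f\in L^1$, so it vanishes a.e. This is where I expect the only real difficulty: the null set depends on $(\varphi,\psi)$, while \eqref{covreln} asserts an operator identity off a single null set. I would handle this as in Lemma~\ref{lemE}. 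Restrict to the countable set of pairs $(\Phi_\alpha,\Phi_\beta)$ and take the union $N_h$ of the exceptional sets, which is still null. Off $N_h$, both operators $k_\lambda(gh^{-1})$ and $k_\lambda(g)\pi_\lambda(h)^*$ are bounded with norm at most $C$, and their matrix coefficients agree on an orthonormal basis. Hence the two operators coincide, and they coincide a.e.\ in $g$.

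Finally, replace $h$ by $h^{-1}$, which again lies in $\mathcal{K}$, and write $k=h^{-1}$. Using $\pi_\lambda(h)^*=\pi_\lambda(k)$ gives $k_\lambda(gk)=k_\lambda(g)\pi_\lambda(k)$ for a.e.\ $g$, which is \eqref{covreln}. The null set may depend on $k$, which is all that the statement requires.
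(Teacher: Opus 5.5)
Your proposal is correct and follows the paper's proof essentially step for step. You obtain \eqref{genformula} by the change of variables $g'=gh$ and right-invariance of Haar measure. You then use $L^1$--$L^\infty$ duality with a countable Hermite-based family of test vectors to upgrade a.e.\ equality of matrix coefficients to a.e.\ operator equality. Finally, you use that $\mathcal{K}$ is closed under inversion, so that $\pi_\lambda(h)^*=\pi_\lambda(h^{-1})$ with $h^{-1}\in\mathcal{K}$.

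The differences are only cosmetic: you prove sufficiency before necessity, and you test on the basis pairs $(\Phi_\alpha,\Phi_\beta)$ rather than on their $\mathbb{Q}[i]$-span. The latter works equally well because both operators are bounded.
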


\begin{proof}
By Lemma~\ref{lemE},
\[
B(R_hf)(\lambda) = \int_{\Hn} f(gh)\,k_\lambda(g)\,dg.
\]
Substituting $g'=gh$ and using right-invariance of Haar measure on the unimodular group $\Hn$ gives \eqref{genformula}.

Now take $h=k\in\mathcal K$. Since $\lambda\in\Lambda$, the shift property \eqref{shiftprop} holds at $\lambda$ for every $f\in L^1(\Hn)$ (Remark~\ref{rem:fixed-lambda}), i.e.,
\[
B(R_kf)(\lambda)=B(f)(\lambda)\pi_\lambda(k)^*=B(f)(\lambda)\pi_\lambda(k^{-1}).
\]
Comparing with \eqref{genformula}, pairing both sides with $\varphi,\psi\in L^2(\mathbb{R}^n)$, and using Lemma~\ref{lemE},
\[
\int_{\Hn}f(g)\Big[\big(k_\lambda(gk^{-1})\varphi,\psi\big)-\big(k_\lambda(g)\pi_\lambda(k^{-1})\varphi,\psi\big)\Big]\,dg=0 \qquad \text{for every } f\in L^1(\Hn).
\]
The expression in square brackets is a bounded measurable function of $g$, since $\|k_\lambda\|_\infty\le C$, $\pi_\lambda(k^{-1})$ is unitary, and right translation preserves Haar measure. Since a bounded function integrating to zero against every $f\in L^1(\Hn)$ vanishes almost everywhere, for fixed $\varphi,\psi$,
\[
\big(k_\lambda(gk^{-1})\varphi,\psi\big)=\big(k_\lambda(g)\pi_\lambda(k^{-1})\varphi,\psi\big) \qquad \text{for a.e.\ } g\in\Hn.
\]
Let $\varphi,\psi$ range over the countable set of finite $\mathbb{Q}[i]$-linear combinations of Hermite functions used in the proof of Lemma~\ref{lemE}. The union of the corresponding null sets is again null. Outside this null set, the bounded sesquilinear forms 
\[
(\varphi,\psi)\mapsto(k_\lambda(gk^{-1})\varphi,\psi) \textrm{ \ and \ } (\varphi,\psi)\mapsto(k_\lambda(g)\pi_\lambda(k^{-1})\varphi,\psi)
\]
agree on a dense subset of $L^2(\mathbb{R}^n)\times L^2(\mathbb{R}^n)$, and hence they agree everywhere. Thus
\[
k_\lambda(gk^{-1})=k_\lambda(g)\,\pi_\lambda(k^{-1}) \qquad \text{for a.e.\ } g\in\Hn.
\]
Replacing $k$ by $k^{-1}$, which also ranges over $\mathcal K$, gives \eqref{covreln}.

Conversely, if \eqref{covreln} holds for every $k\in\mathcal K$, substituting $k_\lambda(gk^{-1})=k_\lambda(g)\pi_\lambda(k^{-1})$ in \eqref{genformula} with $h=k$ gives
\[
B(R_kf)(\lambda)=B(f)(\lambda)\,\pi_\lambda(k)^*,
\]
which is the shift property.
\end{proof}

\begin{lemma}[Closure and generation]
\label{lemG}
For every $(z,t)\in\Hn$, relation \eqref{covreln} holds, for a.e.\ $g\in\Hn$ (with an exceptional null set that may depend on $(z,t)$), taking $k=(z,t)$.
\end{lemma}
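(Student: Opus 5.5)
Let $\mathcal{G}\subset\Hn$ be the set of all $k\in\Hn$ such that $k_\lambda(gk)=k_\lambda(g)\pi_\lambda(k)$ for a.e.\ $g\in\Hn$. The plan is to show that $\mathcal{G}$ is a subgroup of $\Hn$ containing $\mathcal{K}$. Since Lemma~\ref{lemF} already gives $\mathcal{K}\subset\mathcal{G}$, it then suffices to show that the subgroup generated by $\mathcal{K}$ is all of $\Hn$.

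\emph{Closure under products.} Let $k_1,k_2\in\mathcal{G}$, with exceptional null sets $N_1,N_2$. The set $\{g: gk_1\in N_2\}=N_2k_1^{-1}$ is null, by right invariance of Haar measure. For $g\notin N_1\cup N_2k_1^{-1}$ we compute
\[
k_\lambda(gk_1k_2)=k_\lambda((gk_1)k_2)=k_\lambda(gk_1)\pi_\lambda(k_2)=k_\lambda(g)\pi_\lambda(k_1)\pi_\lambda(k_2)=k_\lambda(g)\pi_\lambda(k_1k_2),
\]
using that $\pi_\lambda$ is a homomorphism. Hence $k_1k_2\in\mathcal{G}$.

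\emph{Closure under inverses.} Let $k\in\mathcal{G}$. Substituting $g\mapsto gk^{-1}$, which preserves null sets, gives $k_\lambda(g)=k_\lambda(gk^{-1})\pi_\lambda(k)$ for a.e.\ $g$. Multiplying on the right by $\pi_\lambda(k)^{-1}=\pi_\lambda(k^{-1})$ yields $k^{-1}\in\mathcal{G}$. The identity lies in $\mathcal{G}$ trivially.

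\emph{Generation.} Every $(z,t)\in\Hn$ must be written as a finite product of elements of $\mathcal{K}$. First, $\mathcal{K}$ already contains $(-se_j,0)$ and $(-ise_j,0)$. Second, for $z=x+iy$ the ordered product $(x_1e_1,0)\cdots(x_ne_n,0)(iy_1e_1,0)\cdots(iy_ne_n,0)$ equals $(z,c)$ for some explicit central value $c$, since the $x$-generators commute among themselves, as do the $y$-generators. Third, we produce central elements from the group commutator. A short computation with the group law gives
\[
(ae_j,0)(ibe_j,0)(-ae_j,0)(-ibe_j,0)=(0,ab),
\]
up to sign conventions. Hence every $(0,t)$ with $t\in\mathbb{R}$ is obtained as such a commutator. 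Finally, $(z,t)=(z,c)(0,t-c)$ is a finite product of elements of $\mathcal{K}$, so $\mathcal{G}=\Hn$.

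The main step to check is the product closure, and in particular that the exceptional null sets compose correctly under right translation. This is handled by the right invariance of Haar measure on the unimodular group, exactly as above. The explicit commutator computation is routine and only needs care with the factor $\tfrac12\operatorname{Im}$ in the group law. Because only finitely many compositions are involved for each fixed $(z,t)$, the final exceptional set is a finite union of null sets, consistent with the statement that it may depend on $(z,t)$.
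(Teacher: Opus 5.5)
Your proof is correct and follows the paper's argument. It shows that the set of admissible $k$ is a subgroup, with the same null-set bookkeeping $N_1\cup N_2k_1^{-1}$, and writes $(z,t)$ explicitly as a finite product of elements of $\mathcal{K}$, with the central part obtained from a group commutator. One small correction: with the paper's group law the commutator $(ae_j,0)(ibe_j,0)(-ae_j,0)(-ibe_j,0)$ equals $(0,-ab)$, not $(0,ab)$, and your ordered product gives $c=-\tfrac12\sum_j x_jy_j$; the sign is immaterial because $a,b$ range over all of $\mathbb{R}$.
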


\begin{proof}
Suppose \eqref{covreln} holds for $k_1$ and for $k_2$, each for a.e.\ $g$, say outside null sets $N_1$ and $N_2$ respectively. So $N_2k_1^{-1}$ is also null, and evaluating \eqref{covreln} for $k_2$ at $gk_1$, then \eqref{covreln} for $k_1$ at $g$, gives
\[
k_\lambda(gk_1k_2)=k_\lambda(gk_1)\pi_\lambda(k_2)=k_\lambda(g)\pi_\lambda(k_1)\pi_\lambda(k_2)=k_\lambda(g)\pi_\lambda(k_1k_2)\qquad\text{for }g\notin N_1\cup N_2k_1^{-1}.
\]
Thus \eqref{covreln} holds for $k_1k_2$. Applying \eqref{covreln} at $k$ at the point $gk^{-1}$ gives $k_\lambda(g)=k_\lambda(gk^{-1})\pi_\lambda(k)$, that is,
\[
k_\lambda(gk^{-1})=k_\lambda(g)\,\pi_\lambda(k^{-1}) \qquad \text{for a.e.\ } g,
\]
so \eqref{covreln} also holds for $k^{-1}$. Hence the set of $k\in\Hn$ for which \eqref{covreln} holds (for a.e.\ $g$) is a subgroup of $\Hn$ containing $\mathcal K$, and it suffices to exhibit, for an arbitrary $(z,t)\in\Hn$, a finite product of elements of $\mathcal K$ equal to $(z,t)$.

Write $z=(x_1+iy_1,\dots,x_n+iy_n)$ and set 
\[
\Delta t:=t+\tfrac12\sum_{j=1}^n x_jy_j.
\]
For $g=(z,0)$, $h=(w,0)\in\Hn$, the commutator identity $ghg^{-1}h^{-1}=\big(0,\operatorname{Im}(z\bar w)\big)$ holds. Taking $z=ae_1$, $w=ibe_1$ with $a,b\in\mathbb{R}$, we get $z\bar w=-iab$, and
\[
(ae_1,0)(ibe_1,0)(-ae_1,0)(-ibe_1,0)=(0,-ab),
\]
Taking $a=1$, $b=-\Delta t$ yields the central translation
\[
(e_1,0)(-i\Delta t\,e_1,0)(-e_1,0)(i\Delta t\,e_1,0)=(0,\Delta t).
\]
By the same computation, for each $j$, we get
\[
(x_je_j,0)(iy_je_j,0)=\big(x_je_j+iy_je_j,\,-\tfrac12x_jy_j\big).
\]
Since the Hermitian form $z\cdot\bar w$ vanishes whenever $z,w$ have disjoint coordinate support, elements of $\mathcal{K}$ attached to distinct coordinates $j\ne k$ commute and contribute no cross term to the central coordinate. Hence
\[
(x_1e_1,0)(iy_1e_1,0)\cdots(x_ne_n,0)(iy_ne_n,0)=\Big(z,\ -\tfrac12\sum_{j=1}^n x_jy_j\Big).
\]
Multiplying by the central correction above, we get
\[
(x_1e_1,0)(iy_1e_1,0)\cdots(x_ne_n,0)(iy_ne_n,0)\cdot(e_1,0)(-i\Delta t\,e_1,0)(-e_1,0)(i\Delta t\,e_1,0)=(z,t),
\]
a product of $2n+4$ elements of $\mathcal{K}$. By the closure property established above, \eqref{covreln} holds, for a.e.\ $g$, at $k=(z,t)$.
\end{proof}

The following result illustrates that the kernel collapses to a much simpler form.
\begin{lemma}
\label{lemH}
There is $D(\lambda)\in\mathcal{B}(L^2(\mathbb{R}^n))$ such that $k_\lambda(g)=D(\lambda)\,\pi_\lambda(g)$ for a.e.\ $g\in\Hn$.
\end{lemma}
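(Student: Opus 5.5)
The starting point is Lemma~\ref{lemG}: for every $k\in\Hn$ we have $k_\lambda(gk)=k_\lambda(g)\pi_\lambda(k)$ for a.e.\ $g$, with the null set depending on $k$. The natural candidate is therefore $D(\lambda):=k_\lambda(g)\pi_\lambda(g)^{-1}$. We need to show that this is a.e.\ constant in $g$. The obstacle is that the exceptional null set varies with $k$, and there are uncountably many $k$. To get around this, I would pass to the product space $\Hn\times\Hn$ and use Fubini.

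Fix $\varphi,\psi$ in the countable set $\mathcal{E}$ of $\mathbb{Q}[i]$-combinations of Hermite functions. Consider the function
\[
F(g,k):=\big(k_\lambda(gk)\varphi,\psi\big)-\big(k_\lambda(g)\pi_\lambda(k)\varphi,\psi\big).
\]
It is jointly measurable on $\Hn\times\Hn$. The first term is a weakly measurable function composed with the measurable map $(g,k)\mapsto gk$. For the second term, write $\pi_\lambda(k)\varphi$ in the Hermite basis and use continuity of $k\mapsto\pi_\lambda(k)\varphi$; alternatively, approximate $\pi_\lambda(k)\varphi$ uniformly on compacts by finitely many Hermite coefficients. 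Both terms are bounded. By Lemma~\ref{lemG}, for every $k$ the section $F(\cdot,k)$ vanishes a.e. By Fubini, $F=0$ a.e.\ on $\Hn\times\Hn$, so for a.e.\ $g$ the section $F(g,\cdot)$ vanishes for a.e.\ $k$. Taking the union over the countably many pairs $(\varphi,\psi)$, we obtain a single conull set $E$ with the following property: for each $g_0\in E$ there is a conull set $A_{g_0}$ of $k$ such that $(k_\lambda(g_0k)\varphi,\psi)=(k_\lambda(g_0)\pi_\lambda(k)\varphi,\psi)$ for all $\varphi,\psi\in\mathcal{E}$. By density and boundedness, this gives $k_\lambda(g_0k)=k_\lambda(g_0)\pi_\lambda(k)$ as operators, for all $k\in A_{g_0}$.

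Now fix one $g_0\in E$ and set $D(\lambda):=k_\lambda(g_0)\pi_\lambda(g_0)^{-1}$, which is bounded with norm at most $C$. Substitute $g=g_0k$. Since left translation by $g_0^{-1}$ preserves Haar measure, $g\in g_0A_{g_0}$ holds for a.e.\ $g$. For such $g$,
\[
k_\lambda(g)=k_\lambda(g_0)\pi_\lambda(g_0^{-1}g)=k_\lambda(g_0)\pi_\lambda(g_0)^{-1}\pi_\lambda(g)=D(\lambda)\pi_\lambda(g),
\]
using that $\pi_\lambda$ is a homomorphism. This proves the lemma.

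The main technical step is the joint measurability needed for Fubini. To handle the second term cleanly, I would expand $(k_\lambda(g)\pi_\lambda(k)\varphi,\psi)=\sum_\alpha(\pi_\lambda(k)\varphi,\Phi_\alpha)(k_\lambda(g)\Phi_\alpha,\psi)$. Each summand is a product of a measurable function of $g$ and a continuous function of $k$. The series converges pointwise because $k_\lambda(g)^*\psi\in L^2$. Hence the sum is measurable.
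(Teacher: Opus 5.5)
Your proposal is correct and follows essentially the same route as the paper. You establish joint measurability on $\Hn\times\Hn$, handling the second term by the same Hermite expansion, use Tonelli to turn ``for every $k$, a.e.\ $g$'' into ``for a.e.\ $g$, a.e.\ $k$'' over a countable dense family of test vectors, and then fix one good $g_0$ and set $D(\lambda)=k_\lambda(g_0)\pi_\lambda(g_0)^{-1}$.

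The one step to tighten is the first term. A Lebesgue-measurable function composed with $(g,k)\mapsto gk$ is not automatically measurable, so you should do what the paper does: pass to a Borel representative that differs from $(k_\lambda(\cdot)\varphi,\psi)$ only on a Borel null set $M$. Then note that $\{(g,k): gk\in M\}$ is null by Tonelli, since each of its $g$-slices is the translate $g^{-1}M$.
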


\begin{proof}
Fix $\alpha,\beta\in\mathbb{N}^n$. By Lemma~\ref{lemG}, for every fixed $(z,t)\in\Hn$ the function
\begin{equation}
\label{eq:diffab}
g\longmapsto\big(k_\lambda(g\cdot(z,t))\Phi_\alpha,\Phi_\beta\big)-\big(k_\lambda(g)\pi_\lambda(z,t)\Phi_\alpha,\Phi_\beta\big)
\end{equation}
vanishes for a.e.\ $g\in\Hn$. We first check that the same expression is a measurable function of $(g,z,t)\in\Hn\times\Hn$.

Consider the first term. By Lemma~\ref{lemE}, $(k_\lambda(\cdot)\Phi_\alpha,\Phi_\beta)$ is measurable, so it agrees with a Borel function, say, $\sigma_{\alpha\beta}:\Hn\to\mathbb{C}$ outside a Borel null set $M\subset\Hn$. The map $\nu(g,z,t):=g\cdot(z,t)$ is continuous, so $\sigma_{\alpha\beta}\circ\nu$ is Borel on $\Hn\times\Hn$. The set of points $(g,z,t)$ at which the first term differs from $\sigma_{\alpha\beta}(g\cdot(z,t))$ is contained in $\nu^{-1}(M)$. This preimage is Borel, and for each fixed $g$ its slice in the variable $(z,t)$ is a left translate of $M$, hence null. By Tonelli's theorem $\nu^{-1}(M)$ is null in $\Hn\times\Hn$, so the first term is measurable.

Consider the second term. For each fixed $g$ the map $v\mapsto(k_\lambda(g)v,\Phi_\beta)$ is a bounded linear functional on $L^2(\mathbb{R}^n)$. Expanding $\pi_\lambda(z,t)\Phi_\alpha$ in the orthonormal basis $\{\Phi_\gamma\}_{\gamma\in\mathbb{N}^n}$ gives
\[
\big(k_\lambda(g)\pi_\lambda(z,t)\Phi_\alpha,\Phi_\beta\big)=\sum_{\gamma\in\mathbb{N}^n}\big(\pi_\lambda(z,t)\Phi_\alpha,\Phi_\gamma\big)\big(k_\lambda(g)\Phi_\gamma,\Phi_\beta\big),
\]
where each term is the product of a continuous function of $(z,t)$ and a measurable function of $g$, and the series converges at every point. Hence the second term is measurable as well.

Since the expression is measurable and, for every fixed $(z,t)$, vanishes for a.e.\ $g$, Tonelli's theorem shows that it vanishes a.e.\ on $\Hn\times\Hn$. Again applying Tonelli's theorem in the other order, for a.e.\ $g\in\Hn$, we get that it vanishes for a.e.\ $(z,t)\in\Hn$.

This holds for every pair $\alpha,\beta$. Taking the countable intersection over all pairs, there is a single set $E\subset\Hn$ whose complement is null such that, for every $g\in E$ and a.e.\ $(z,t)\in\Hn$, the expression \eqref{eq:diffab} vanishes for all $\alpha,\beta$ simultaneously. For such $g,(z,t)$ the bounded operators $k_\lambda(g\cdot(z,t))$ and $k_\lambda(g)\pi_\lambda(z,t)$ agree on the Hermite basis. Since both these operators are bounded and the finite linear combinations of Hermite functions are dense in $L^2(\mathbb{R}^n)$, the two operators coincide. Thus, for every $g\in E$,
\[
k_\lambda(g\cdot(z,t))=k_\lambda(g)\,\pi_\lambda(z,t)\qquad\text{for a.e.\ }(z,t)\in\Hn.
\]

Fix $g_0\in E$. The substitution $g:=g_0\cdot(z,t)$, equivalently $(z,t)=g_0^{-1}g$, is a bijection of $\Hn$. Since $\Hn$ is unimodular, it preserves Haar measure, so it turns ``for a.e.\ $(z,t)$'' into ``for a.e.\ $g$''. Thus we have
\[
k_\lambda(g)=k_\lambda(g_0)\,\pi_\lambda(g_0^{-1}g)=k_\lambda(g_0)\,\pi_\lambda(g_0)^{-1}\pi_\lambda(g)=:D(\lambda)\,\pi_\lambda(g)\qquad\text{for a.e.\ }g\in\Hn.
\]
\end{proof}

We are now ready to prove the shift-based characterisation.

\begin{proof}[Proof of Theorem~\ref{thm:main-shift}]
By Lemma~\ref{lemH} and Lemma~\ref{lemE},
\begin{align*}
B(f)(\lambda)&=\int_{\Hn}f(g)\,k_\lambda(g)\,dg\\
&=D(\lambda)\int_{\Hn}f(g)\,\pi_\lambda(g)\,dg=D(\lambda)\,\pi_\lambda(f)
\end{align*}
for every $f\in L^1(\Hn)$, the middle equality being unaffected by altering $k_\lambda$ on a null set. Let $(\delta_U)$ be the Heisenberg Dirac delta family of Definition~\ref{def:ddp}. Then
\[
B(\delta_U)(\lambda)=D(\lambda)\,\pi_\lambda(\delta_U)\longrightarrow D(\lambda)
\]
strongly as $U\to(0,0)$, using strong continuity of $\pi_\lambda$ at the identity (Proposition~\ref{prop:pi-ddp}). The Dirac delta property forces this limit to equal $I$ for a.e.\ $\lambda$, so $D(\lambda)=I$ there, and $B(f)(\lambda)=\pi_\lambda(f)$.
\end{proof}

\section{Further discussion}
\label{sec:furtherdisc}
In this section, we illustrate the equivalence of the hypotheses of the shift- and differentiation-based characterisations proved above. Then we discuss the differences in the nature of the hypotheses of our results  and those of \cite{Krukowski2}.\\

The differentiation property and the shift property are the Lie-algebra and the Lie-group forms of the same condition. The differentiation property consists of finitely many identities, one for each generator $X_j,Y_j$ of the Lie algebra of $\Hn$, while the shift property consists of an identity for every element of each one-parameter subgroup $s\mapsto(se_j,0)$, $s\mapsto(ise_j,0)$ generated by these fields. Continuity of $B$ allows one to pass from one form to the other, giving the following equivalence. We keep Theorems~\ref{thm:main-diff} and~\ref{thm:main-shift} separate because their proofs are different, even though their hypotheses coincide.

\begin{proposition}[Lie algebra versus Lie group]
\label{prop:equiv}
Let $B$ be as in Definition~\ref{def:clt}. Then $B$ satisfies the Heisenberg differentiation property if and only if $B$ satisfies the Heisenberg shift property.
\end{proposition}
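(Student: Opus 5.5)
The cleanest route runs through the kernel. I fix $\lambda\in\Lambda$ and let $k_\lambda$ be the kernel from Lemma~\ref{lemE}. Each direction is then reduced to a structural description of $k_\lambda$ that Sections~\ref{sec:diff} and~\ref{sec:shift} already establish.

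\emph{Shift $\Rightarrow$ differentiation.} Assume the Heisenberg shift property. By Lemmas~\ref{lemF}, \ref{lemG} and~\ref{lemH}, $k_\lambda(g)=D(\lambda)\pi_\lambda(g)$ for a.e.\ $g$. The proof of Theorem~\ref{thm:main-shift} then gives $B(f)(\lambda)=D(\lambda)\pi_\lambda(f)$ for every $f\in L^1(\Hn)$. This identity uses only Lemma~\ref{lemE} and Lemma~\ref{lemH}, not the Dirac delta property. Now take $f\in\mathcal{S}(\Hn)$ and apply Proposition~\ref{prop:lt23}:
\[
B(iX_jf)(\lambda)=D(\lambda)\pi_\lambda(iX_jf)=\lambda D(\lambda)\pi_\lambda(f)Q_j=\lambda B(f)(\lambda)Q_j
\]
on $\mathcal{S}(\mathbb{R}^n)$, and the $Y_j$ identity follows in the same way. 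Since this holds for every $\lambda\in\Lambda$, which is conull, the differentiation property \eqref{diffprop} holds a.e.

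\emph{Differentiation $\Rightarrow$ shift.} Assume the Heisenberg differentiation property. Lemma~\ref{lemCD}(i) gives $B(f)(\lambda)=D(\lambda)\pi_\lambda(f)$ for all $f\in L^1(\Hn)$ and $\lambda\in\Lambda$. Its proof, via Lemmas~\ref{lemdist}, \ref{lem:smooth}, \ref{lemB} and~\ref{lem:limit}, uses only continuity and linearity of $B$ together with \eqref{diffprop}, again not the Dirac delta property. Then Proposition~\ref{prop:lt-prop3} and \eqref{pivals} give
\[
B(R_{(se_j,0)}f)(\lambda)=D(\lambda)\pi_\lambda(f)\pi_\lambda(se_j,0)^*=B(f)(\lambda)e^{-is\lambda Q_j},
\]
and likewise $B(R_{(ise_j,0)}f)(\lambda)=B(f)(\lambda)e^{-sP_j}$. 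Both hold for every $f$, every $s$, and every $\lambda\in\Lambda$, which is \eqref{shiftprop}.

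\emph{Main obstacle.} The main thing to check is that nothing in the chain of lemmas used in each direction secretly invokes the Dirac delta property or the other hypothesis. A second point is the bookkeeping of exceptional $\lambda$-sets in Remark~\ref{rem:fixed-lambda}. Its passage from "for each $f$, a.e.\ $\lambda$" to "for all $f$ at each $\lambda\in\Lambda$" relies on a density argument. For the differentiation property, that argument has to be run in a topology controlling $X_jf$ and $Y_jf$, for instance a countable dense subset of $\mathcal{S}(\Hn)$ in its Schwartz topology, combined with $L^1$-continuity of $B$. I would verify this first.

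A more direct alternative, which I would mention but not pursue, is to differentiate \eqref{shiftprop} at $s=0$ using $L^1$-differentiability of $s\mapsto R_{(se_j,0)}f$ for Schwartz $f$. In the reverse direction one would integrate the differentiation identity along the one-parameter subgroups and show that $s\mapsto B(R_{(se_j,0)}f)(\lambda)e^{is\lambda Q_j}$ has zero derivative. That derivative computation needs care with the unbounded $Q_j$ and $P_j$. The kernel route avoids it.
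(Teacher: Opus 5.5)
Your argument is correct, but it takes a different route from the paper's. You get the equivalence as a corollary of the two structure results. Under either hypothesis, the chains Lemma~\ref{lemdist}--Lemma~\ref{lemCD}(i) and Lemmas~\ref{lemF}--\ref{lemH} give $B(f)(\lambda)=D(\lambda)\pi_\lambda(f)$ without using the Dirac delta property, and you checked this correctly. The factorised form then inherits both properties from Propositions~\ref{prop:lt23} and~\ref{prop:lt-prop3}.

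The paper instead argues directly, essentially by the alternative you set aside:
\begin{itemize}
\item For Schwartz $f$, the map $s\mapsto R_{(se_j,0)}f$ is differentiable in $L^1(\Hn)$ with derivative $X_jR_{(se_j,0)}f$.
\item Boundedness of $f\mapsto B(f)(\lambda)$ then makes $u(s)=B(R_{(se_j,0)}f)(\lambda)$ differentiable in operator norm.
\item Differentiating the shift identity at $s=0$ gives \eqref{diffprop}.
\item Conversely, \eqref{diffprop} gives $u'(s)\varphi=-i\lambda\, u(s)Q_j\varphi$, so $u(s)e^{i\lambda sQ_j}\varphi$ is constant.
\end{itemize}
The unboundedness of $Q_j,P_j$ that worried you is harmless there. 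The groups $e^{i\lambda sQ_j}$ and $e^{sP_j}$ preserve $\mathcal{S}(\mathbb{R}^n)$, commute with $Q_j$ and $P_j$ respectively, and are $L^2$-differentiable on Schwartz vectors, so the product rule applies.

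Your route buys a sharper statement. Each property is equivalent to the factorisation $B(f)(\lambda)=D(\lambda)\pi_\lambda(f)$ with $D(\lambda)$ bounded. This makes transparent that the Dirac delta property serves only to force $D(\lambda)=I$.

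It costs that it rests on the whole kernel machinery: duality, mollification, Lemma~\ref{lemB}, and the Tonelli argument of Lemma~\ref{lemH}. So it cannot be used to deduce one main theorem from the other. The paper's argument is self-contained and independent of Sections~\ref{sec:diff} and~\ref{sec:shift}, and it exhibits the Lie-algebra/Lie-group correspondence directly.

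Your remark on Remark~\ref{rem:fixed-lambda} is well taken and applies to both routes. For the differentiation property, the density argument must use a topology in which $X_jf_m\to X_jf$ and $Y_jf_m\to Y_jf$ in $L^1$, for example a countable dense subset of $\mathcal{S}(\Hn)$. The paper glosses over this.
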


\begin{proof}
Fix $j\in\{1,\dots,n\}$. By Remark~\ref{rem:fixed-lambda}, both properties hold at every $\lambda\in\Lambda$ simultaneously, for all $f$. Fix such a $\lambda\in\Lambda$.

\noindent \emph{Differentiation $\Rightarrow$ shift.} Let $f\in\mathcal{S}(\Hn)$ and $u(s):=B(R_{(se_j,0)}f)(\lambda)$. Since $X_j$ generates $s\mapsto(se_j,0)$, we have
\[
\frac{d}{ds}R_{(se_j,0)}f=R_{(se_j,0)}X_jf=X_jR_{(se_j,0)}f \qquad \text{in } L^1(\Hn).
\]
Since
\[
\lim_{h\to 0}\left\|\frac{R_{((s+h)e_j,0)}f-R_{(se_j,0)}f}{h}-X_jR_{(se_j,0)}f\right\|_{1}=0,
\]
and $B(\cdot)(\lambda):L^1(\Hn)\to\mathcal B(L^2(\mathbb R^n))$ is linear and bounded, applying it to the difference quotient gives
\[
\lim_{h\to 0}\left\|\frac{u(s+h)-u(s)}{h}-B\big(X_jR_{(se_j,0)}f\big)(\lambda)\right\|_{\mathrm{op}}=0,
\]
that is, $u$ is differentiable at $s$ with
\[
u'(s)=B\big(X_jR_{(se_j,0)}f\big)(\lambda).
\]
Applying the first identity in \eqref{diffprop} to $R_{(se_j,0)}f\in\mathcal{S}(\Hn)$ gives
\[
u'(s)\varphi=-i\lambda\,u(s)Q_j\varphi \qquad \text{for } \varphi\in\mathcal{S}(\mathbb{R}^n),
\]
so that $\frac{d}{ds}\big[u(s)e^{i\lambda sQ_j}\varphi\big]=0$. Hence $u(s)\varphi=u(0)e^{-i\lambda sQ_j}\varphi$ for all $\varphi\in\mathcal{S}(\mathbb{R}^n)$, that is,
\[
u(s)=u(0)\,e^{-i\lambda sQ_j}.
\]
This is the first identity in \eqref{shiftprop} for $f\in\mathcal{S}(\Hn)$, and it extends to $f\in L^1(\Hn)$ by density of $\mathcal{S}(\Hn)$ and continuity of $B$. The second identity is obtained in the same way from $Y_j$, with $e^{sP_j}$ in place of $e^{i\lambda sQ_j}$.

\noindent \emph{Shift $\Rightarrow$ differentiation.} Let $f\in\mathcal{S}(\Hn)$ and $\varphi\in\mathcal S(\mathbb R^n)$. Since
\[
\lim_{s\to 0}\left\|\frac{R_{(se_j,0)}f-f}{s}-X_jf\right\|_1=0,
\]
and $B(\cdot)(\lambda)$ is linear and bounded, applying it to the difference quotient gives
\[
\lim_{s\to 0}\left\|\frac{B(R_{(se_j,0)}f)(\lambda)-B(f)(\lambda)}{s}-B(X_jf)(\lambda)\right\|_{\mathrm{op}}=0.
\]
By the first identity in \eqref{shiftprop}, $B(R_{(se_j,0)}f)(\lambda)=B(f)(\lambda)e^{-is\lambda Q_j}$. Applied to $\varphi$, the left-hand difference quotient is therefore
\[
B(f)(\lambda)\,\frac{e^{-is\lambda Q_j}\varphi-\varphi}{s}.
\]
Since $\frac{d}{ds}\big[e^{-is\lambda Q_j}\varphi\big]_{s=0}=-i\lambda Q_j\varphi$, the above expression converges to $B(f)(\lambda)(-i\lambda Q_j\varphi)$ as $s\to 0$. Comparing the two limits on $\varphi$,
\[
B(X_jf)(\lambda)\varphi=-i\lambda B(f)(\lambda)Q_j\varphi.
\]
That is,
\[
B(iX_jf)(\lambda)\varphi=\lambda B(f)(\lambda)Q_j\varphi,
\]
which is the first identity in \eqref{diffprop}. The second identity is obtained in the same way from $Y_j$.

Since $j\in\{1,\dots,n\}$ was arbitrary, this proves
\[
\text{\eqref{diffprop} holds at $\lambda$ for all $f$} \quad\Longleftrightarrow\quad \text{\eqref{shiftprop} holds at $\lambda$ for all $f,s$},
\]
and since $\lambda\in\Lambda$ was arbitrary, $B$ satisfies the Heisenberg differentiation property if and only if it satisfies the Heisenberg shift property. In both directions, the central relation $[X_j,Y_j]=-\partial_t$ on one side (Lemma~\ref{lemB}, Step~1), and the group commutator identity for the one-parameter subgroups on the other (Lemma~\ref{lemG}), recover the same central direction of $\Hn$, as the Lie-algebra and Lie-group forms of the same fact.
\end{proof}

\begin{remark} \label{rem:shift1}
The Heisenberg shift property (Definition~\ref{def:hsp}) is imposed only on the $2n$ one-parameter subgroups generated by $X_1,\dots,X_n,Y_1,\dots,Y_n$, in contrast with the locally compact abelian shift theorem of \cite{Krukowski2}, where the corresponding property is imposed for every $x$ in the group, that is, at infinitely many points. This is a notable instance where passing from an abelian to a non-abelian group weakens the hypothesis needed. A finite covariance hypothesis suffices on $\Hn$ (Lemma~\ref{lemG}) precisely because noncommutativity supplies the missing central direction as a commutator. However, the corresponding hypothesis for a general locally compact abelian group cannot be reduced to a proper subset of the group. Indeed, the commutator-based generation of Lemma~\ref{lemG} is identically trivial on an abelian group. Moreover, $\mathbb{R}^m$ and other connected abelian Lie groups are not finitely generated as abstract groups, so no finite family of one-parameter subgroups can play the role of $\mathcal{K}$.
\end{remark}

\begin{remark}\label{rem:shift2}
The family $\mathcal{K}$ cannot be reduced either. If the pair for any single $j$ is omitted, then the remaining family generates only a proper subgroup of $\Hn$. Since $[X_j,Y_k]=0$ for $k\ne j$, the missing direction cannot be recovered as a commutator of the rest. This will stop Lemma~\ref{lemG}'s construction from reaching $(x_j,y_j)$.
\end{remark}

\begin{remark} \label{rem:diffn}
Krukowski's differentiation theorem (Theorem~\ref{thm:kruk}) is proved for $\mathbb{R}$. An analogous statement and proof hold for $\mathbb{R}^n$, requiring $n$ differentiation identities, one per coordinate direction. By comparison, one might expect the Heisenberg differentiation property on $\Hn$ to require $2n+1$ identities, one for each generator of the $(2n+1)$-dimensional Lie algebra of $\Hn$. It requires instead only $2n$ identities, one each for $X_1,\dots,X_n,Y_1,\dots,Y_n$. This is possible as the behaviour along the remaining, central direction $\partial_t$ is recovered rather than assumed, from the commutator relations satisfied by the left-invariant vector fields on $\Hn$ (Lemma~\ref{lemB}, Step~1). This count cannot be reduced further. The argument of Lemma~\ref{lemB}, Step~3, recovers the dependence of $\rho$ on each pair $(x_j,y_j)$ from the identity for that $j$ only. Moreover, the bracket relations $[X_j,Y_k]=0$ for $j\neq k$ mean no other hypothesis in the family supplies this information. Thus omitting the identity for any single $j$ leaves the dependence on $(x_j,y_j)$ undetermined.

\end{remark}

\begin{remark}
The mechanism of proof of Theorem~\ref{thm:main-diff} departs from the argument for Theorem~\ref{thm:kruk} at exactly the point where noncommutativity enters. On $\mathbb{R}$ the kernel of the unknown transform differs from that of $\mathcal{F}$ by a factor depending only on the dual variable. This is because the exponential kernel is a scalar, hence commutes with any other scalar, so the resulting first-order ODE is straightforward to solve. On $\Hn$, the kernel $\pi_\lambda(z,t)$ is operator-valued, and the corresponding first-order system of PDEs \eqref{Rsys} does not integrate to a constant. It integrates instead to a \emph{left multiple} of $\pi_\lambda(z,t)$ by a fixed operator, and identifying this requires showing that $\rho(z)\pi_\lambda(z)^{-1}$ has vanishing total derivative, which is where the identities of Lemma~\ref{lemA} and the noncommutative structure enter (Lemma~\ref{lemB}, Step~3).
\end{remark}

\section{Concluding remarks}
Theorems~\ref{thm:main-diff} and~\ref{thm:main-shift} together give two independent characterisations of the group Fourier transform on $\Hn$ - by its behaviour with respect to the invariant vector fields, and by its behaviour under right translation. Each requires, as in Krukowski's approach, only that the map $B$ be continuous and linear, together with the Heisenberg Dirac delta property. As shown in Proposition~\ref{prop:equiv}, the two hypotheses are in fact equivalent, being the Lie-algebra and the Lie-group forms of the same underlying condition. This extends Krukowski's shift-property approach, previously available for locally compact abelian groups, and his differentiation-property approach, previously available only for $\mathbb{R}$. It is, to our knowledge, the first instance of this approach for a non-abelian group, without any convolution-based hypotheses.


\begin{thebibliography}{99}

\bibitem{ABM1} S. Achar, M.H.A. Biswas, R. Manna, \textit{Characterization of Fourier transform of Hilbert space valued functions,} arXiv:2610.36957.

\bibitem{ABM2} S. Achar, M.H.A. Biswas, R. Manna, \textit{Characterization of the group Fourier transform of Hilbert space valued functions,} arXiv:2611.03267.

\bibitem{AAM}
S.~Alesker, S.~Artstein-Avidan, and V.~Milman, \emph{A characterization of the Fourier transform and related topics}, C. R. Math. Acad. Sci. Paris \textbf{346} (2008), no.~11-12, 625-628.

\bibitem{Folland}
G.~B.~Folland, \emph{Harmonic Analysis in Phase Space}, Annals of Mathematics Studies, vol.~122, Princeton University Press, Princeton, NJ, 1989.

\bibitem{Geller}
D.~Geller, \emph{Fourier analysis on the Heisenberg group. I. Schwartz space}, J. Funct. Anal. \textbf{36} (1980), no.~2, 205-254.

\bibitem{Jaming}
P.~Jaming, \emph{A characterization of Fourier transforms}, Colloq. Math. \textbf{118} (2010), no.~2, 569-580.

\bibitem{Krukowski1}
M.~Krukowski, \emph{Integral transforms characterized by convolution}, Results Math. \textbf{78} (2023), no.~4, Paper No.~120, 12~pp.

\bibitem{Krukowski2}
M.~Krukowski, \emph{Characterizing the Fourier transform by its properties}, Pacific J. Math. \textbf{329} (2024), no.~2, 217-232.

\bibitem{Ksft}
R.~Lakshmi Lavanya, 
\emph{Some Characterisations
of the Strichartz Fourier Transform on the Heisenberg Group},
preprint, arXiv:2611.01128.

\bibitem{LT2012}
R.~Lakshmi Lavanya and S.~Thangavelu, \emph{A characterisation of the Fourier transform on the Heisenberg group}, Ann. Funct. Anal. \textbf{3} (2012), no.~1, 109-120.

\bibitem{LT2014}
R.~Lakshmi Lavanya and S.~Thangavelu, \emph{Revisiting the Fourier transform on the Heisenberg group}, Publ. Mat. \textbf{58} (2014), no.~1, 47-63.

\bibitem{KS}
N.~Shravan Kumar and S.~Sivananthan, \emph{Characterisation of the Fourier transform on compact groups}, Bull. Aust. Math. Soc. \textbf{93} (2016), no.~3, 467-472.

\bibitem{Thangavelu}
S.~Thangavelu, \emph{Harmonic Analysis on the Heisenberg Group}, Progress in Mathematics, vol.~159, Birkh\"auser, Boston, 1998.

\end{thebibliography}
\end{document}